\theoremstyle{plain}
\newtheorem{theo}{Theorem}[section]
\newtheorem{cor}[theo]{Corollary} 
\newtheorem{exam}[theo]{Example}
\newtheorem{ques}[theo]{Question}
\newtheorem{prop}[theo]{Proposition}
\theoremstyle{definition} 
\newtheorem{defn}[theo]{Definition}
\begin{document}
\sloppy

\title[Compact-bounded topological groups]{Compact-bounded topological groups}

\author [J. A. Mart\'{\i}nez-Cadena]{J. A. Mart\'{\i}nez-Cadena}
\address[J. A. Mart\'{\i}nez-Cadena ]{Departamento de Matem\'aticas, Facultad de Ciencias, 
Universidad Nacional Aut\'onoma de M\'exico, Circuito Exterior s/n, Ciudad Universitaria, Apartado Postal 04510, Ciudad de M\'exico, M\'exico. } \email {martinezcadenajuan@gmail.com}


\begin{abstract}
We will study two subclasses of the class of feebly compact spaces in the class of (para)topological groups, the compact-bounded and weakly compact-bounded spaces, both introduced by J.  Angoa, Y.  F.  Ortiz-Castillo and A. Tamariz-Mascar\'ua in \cite{AOT}. Also,  we will study the $r$-weakly compact-bounded subsets of a topological space $X$.
\end{abstract}

\keywords {Feebly compact space, compact-bounded topological group, weakly compact-bounded topological group, $r$-weakly compact-bounded subset, $k$-space}

\subjclass[2010] {Primary  	22A05,  54D99; Secondary 54A05, 54D30, 54C10}


\maketitle

\section{Introduction}

All topological spaces are assumed to be Hausdorff unless a stronger separation axiom is specified. A space is feebly compact if every locally finite family of non-empty open sets is finite. In the class of Tychonoff spaces, feeble compactness is equivalent to pseudocompactness and throughout this paper a pseudocompact space is always assumed to be Tychonoff. As usual, $cl_X(A)$ (or $\overline{A}$) denotes the closure of a subset $A$ of a topological space $X$.  We denote by $\omega$ the collection of finite ordinal numbers.  All undefined terms can be found in \cite{AT} or \cite{E}.
 
\begin{defn}\label{comboun}Let $X$ be a topological space.
\begin{enumerate}

\item $X$ is \textit{compact-bounded} if whenever $\mathcal U$ is a  countable family of non-empty open subsets of $X$, there is some compact subset $K\subset X$ which meets each element of $\mathcal U$.

\smallskip

\item $X$ is \textit{weakly compact-bounded} if whenever $\mathcal U$ is an infinite  countable family of  non-empty open subsets of $X$, there is a compact subset  $K\subset X$ which intersects infinitely many elements of $\mathcal{U}$. 
\end{enumerate}
\end{defn}

The compact-boundedness and weak compact-boundedness   properties were introduced by J.  Angoa, Y.  F.  Ortiz-Castillo and A. Tamariz-Mascar\'ua in the class of Tychonoff spaces  under the name \textit{pseudo-$\omega$-boundedness} and \textit{weak pseudo-$\omega$-boundedness}, respectively, in \cite{AOT}.

In this paper we study  compact-bounded and weakly compact-bounded spaces in the class of topological and paratopological groups. In Section 2, we study basic properties of these spaces in the class of topological spaces. In Section 3, we focus on finding conditions under which a paratopological group is a topological group, for this, we find a relation of compact-bounded spaces with the 2-pseudocompact spaces.

Given a subgroup $H$ of a topological group $G$,
in Section 4, we study how to extend the properties of the quotient space $G/H$ to $G$, this, in the cases  when $H$ is either a compact, locally compact or a (weakly) compact-bounded  subgroup. In Section 5, we find a condition on a Tychonoff space $X$ guaranteeing that the free topological group $F(X)$ and the free abelian topological group $A(X)$ admit an open continuous homomorphism onto the circle group $\mathbb{T}$.

Finally, in the Section 6, we study the $r$-weakly compact-bounded subsets of a topological space $X$ and relate them with the strongly $r$-pseudocompact, $r$-pseudocompact and $C$-compact subsets of $X$.

\section{Basic properties}
In this section, we present basic properties of (weakly) compact-bounded spaces. The next fact shows that the family $\mathcal{U}$ in the Definition \ref{comboun}-(2) can be considered as a family of  mutually disjoint non-empty open sets of the topological space $X$.

\begin{prop}[Proposition 2.4 in \cite{AMW}]\label{disj} For any topological space X,  the following are equivalent.
\begin{enumerate}
\item X is weakly compact-bounded;

\item For any infinite countable family of mutually disjoint  non-empty open subsets of $X$, there is a compact subspace  $K\subset X$ which intersects infinitely many elements of $\mathcal{U}$. 
\end{enumerate}
\end{prop}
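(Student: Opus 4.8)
The implication (1)$\Rightarrow$(2) is immediate: a countable mutually disjoint family of non-empty open sets is in particular a countable family of non-empty open sets, so the compact set supplied by weak compact-boundedness already witnesses (2). All the work lies in (2)$\Rightarrow$(1). For that direction I would fix an arbitrary countable family $\mathcal{U}=\{U_n:n\in\omega\}$ of non-empty open subsets of $X$ and produce a compact set meeting infinitely many $U_n$. First I would dispose of a degenerate case: if some point $x\in X$ belongs to $U_n$ for infinitely many $n$, then the singleton $\{x\}$ is compact and already meets infinitely many members of $\mathcal{U}$, and we are done without even invoking (2). So I may assume the family is \emph{point-finite}, i.e. every point of $X$ lies in only finitely many of the $U_n$.

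The plan is then to manufacture, inside $\mathcal{U}$, a mutually disjoint family to which (2) applies, and to transfer the conclusion back. Concretely, I would construct by recursion a strictly increasing sequence of indices $n_0<n_1<\cdots$ together with non-empty open sets $V_k$ satisfying $V_k\subseteq U_{n_k}$ and $V_k\cap V_j=\emptyset$ for $j<k$. Having chosen $V_0,\dots,V_{k-1}$, write $W=V_0\cup\cdots\cup V_{k-1}$; to continue I need an index $m\notin\{n_0,\dots,n_{k-1}\}$ with $U_m\not\subseteq\overline{W}$, for then $V_k:=U_m\setminus\overline{W}$ is a non-empty open subset of $U_m$ disjoint from each previous $V_j$. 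To guarantee that such an $m$ is always available I would carry along the stronger invariant that infinitely many indices $m$ still satisfy $U_m\cap\overline{W}=\emptyset$: at each step one picks $n_k$ among these free indices and chooses $V_k\subseteq U_{n_k}$ small enough that infinitely many free indices survive.

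Once the recursion runs through all of $\omega$, the set $\{V_k:k\in\omega\}$ is a countable mutually disjoint family of non-empty open sets, so (2) furnishes a compact $K\subseteq X$ meeting $V_k$ for infinitely many $k$. Since $V_k\subseteq U_{n_k}$ and the $n_k$ are pairwise distinct, $K$ meets $U_{n_k}$ for infinitely many $k$, hence meets infinitely many members of $\mathcal{U}$; this is exactly (1).

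The main obstacle is the bookkeeping that keeps the recursion alive, namely showing that under point-finiteness one can always choose $V_k\subseteq U_{n_k}$ whose closure is disjoint from $U_m$ for infinitely many of the remaining indices. The danger is that all still-unused $U_m$ accumulate in the closure $\overline{W}$ of the finitely many pieces already selected, stalling the construction; point-finiteness is precisely what should exclude this, since no single point — and, after a shrinking argument, no sufficiently small neighbourhood — can be forced to lie in the closures of infinitely many $U_m$. This shrinking step is transparent when $X$ is regular, as in the Tychonoff and (para)topological-group settings that motivate the paper, where one may arrange $\overline{V_k}\subseteq U_{n_k}$ and separate a point from a closed set; executing it with only the Hausdorff axiom is the delicate point, and is where I would concentrate the rigorous argument.
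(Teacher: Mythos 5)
The paper does not prove this statement at all---it is quoted verbatim as Proposition 2.4 in \cite{AOT}---so your argument has to stand on its own, and as written it does not. The implication (1)$\Rightarrow$(2) and the overall strategy for the converse (disjointify, apply (2), transfer back) are reasonable, but both the case division and the bookkeeping in (2)$\Rightarrow$(1) fail. First, your dichotomy is the wrong one: after discarding the case of a point lying in infinitely many $U_n$ you are left with a point-finite family, but point-finiteness is far too weak to drive the recursion. What the greedy construction actually needs is that every point has a \emph{neighbourhood} meeting only finitely many $U_n$; the negation of that is the existence of a cluster point of the sequence (a point all of whose neighbourhoods meet infinitely many $U_n$), which need not belong to any $U_n$ and so is not absorbed by your singleton case. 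Second, the invariant you propose to carry---infinitely many $m$ with $U_m\cap\overline{W}=\emptyset$---is not a consequence of point-finiteness and can be unachievable already at the first step: since $U_m$ is open, $U_m\cap\overline{V_0}=\emptyset$ is equivalent to $U_m\cap V_0=\emptyset$, and there exist point-finite families of \emph{dense} open sets (for instance $U_n=\mathbb{Q}\setminus\{q_0,\dots,q_n\}$ in $\mathbb{Q}$, where $q_k$ lies only in $U_0,\dots,U_{k-1}$), for which no choice of a non-empty open $V_0$ leaves a single ``free'' index, let alone infinitely many. Your closing paragraph concedes that executing the shrinking step under the Hausdorff axiom alone is ``the delicate point'' still to be carried out; that delicate point is precisely the content of the proposition, so what you have is a plan with the key step missing, not a proof.

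The repair is to replace your dichotomy by the cluster-point dichotomy. If no point of $X$ is a cluster point of $\{U_n:n\in\omega\}$, then each chosen point $x_k\in U_{n_k}$ has an open neighbourhood $O_k$ meeting only finitely many $U_n$; set $V_k=U_{n_k}\cap O_k$ and choose $n_{k+1}$ beyond every index of a $U_n$ met by $V_0\cup\dots\cup V_k$. This keeps the $V_k$ pairwise disjoint with no appeal to closures or to regularity. If some $x$ is a cluster point, then either $x$ lies in infinitely many $U_n$ (and $\{x\}$ is your compact set), or you can nest open neighbourhoods $O_0\supseteq O_1\supseteq\cdots$ of $x$, at each stage picking $m_k$ with $O_k\cap U_{m_k}\neq\emptyset$ and a point $y_k\in O_k\cap U_{m_k}$ distinct from $x$, and using the Hausdorff axiom to split $O_k$ into an open $W_k\ni y_k$ contained in $O_k\cap U_{m_k}$ and an open $O_{k+1}\ni x$ with $W_k\cap O_{k+1}=\emptyset$; the sets $W_k$ are then pairwise disjoint and sit inside distinct members of $\mathcal{U}$, so (2) applies. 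Either branch produces the disjoint family you were after.
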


A topological space with  property (2) of the previous proposition is called \textit{weakly cellular-compact} (\cite{AMW}).

Recall that a space is \textit{weakly regular} if every non-empty open set contains a non-empty regular closed subset.  Using Proposition \ref{disj}, the proof of the next result is identical to that of  Proposition 4.9 of \cite{PTV},    in which complete regularity is assumed, but weak regularity is all that is required.

\begin{prop}\label{fk} A weakly regular $k$-space $X$ is weakly compact-bounded if and only if $X$ is feebly compact. 
\end{prop}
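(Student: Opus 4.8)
The plan is to prove the two implications separately, treating the forward implication (weak compact-boundedness $\Rightarrow$ feeble compactness) as the routine one and reserving the hypotheses of weak regularity and the $k$-space property for the converse. For the forward direction I would argue by contraposition: if $X$ is not feebly compact, choose an infinite locally finite family of non-empty open sets and extract a countably infinite (still locally finite) subfamily $\{U_n : n \in \omega\}$. Weak compact-boundedness supplies a compact $K$ meeting infinitely many $U_n$; but covering $K$ by finitely many neighborhoods, each meeting only finitely many $U_n$, shows that $K$ can meet only finitely many members, a contradiction. This step needs neither weak regularity nor the $k$-space hypothesis.

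For the converse I would first invoke Proposition \ref{disj} to replace an arbitrary countable family by a pairwise disjoint one: it suffices to show that every countable family $\{U_n : n\in\omega\}$ of mutually disjoint non-empty open sets is met by a compact set in infinitely many members. Assume for contradiction that no compact set meets infinitely many $U_n$. Using weak regularity, inside each $U_n$ I pick a non-empty regular closed set, i.e. a non-empty open $W_n$ with $\overline{W_n}\subseteq U_n$; the closures $\overline{W_n}$ are then pairwise disjoint, and since $\overline{W_n}\subseteq U_n$ no compact set can meet infinitely many of the $\overline{W_n}$ either. Consequently, for every compact $K\subseteq X$ the intersection $K\cap \bigcup_{n\ge m}\overline{W_n}$ is a finite union of sets closed in $K$, hence closed in $K$; the $k$-space property then yields that $Y_m:=\bigcup_{n\ge m}\overline{W_n}$ is closed in $X$ for each $m$.

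To finish I would use the standard characterization that in a feebly compact space every decreasing sequence of non-empty open sets has non-empty intersection of closures (which I would cite or prove in a line). Applying this to the decreasing sequence $G_m:=\bigcup_{n\ge m}W_n$, feeble compactness gives a point $p\in\bigcap_m\overline{G_m}$. Since $\overline{G_m}\subseteq Y_m$, we get $p\in Y_m=\bigcup_{n\ge m}\overline{W_n}$ for every $m$, which forces $p\in\overline{W_n}$ for infinitely many $n$ and contradicts the pairwise disjointness of the $\overline{W_n}$. I expect the main obstacle to be this converse, and within it the crucial (and least formulaic) step is the transition where the $k$-space property upgrades the trivial local closedness of $\bigcup_{n\ge m}\overline{W_n}$ on compact sets to genuine closedness in $X$; here weak regularity is exactly what is needed to produce the closed pieces $\overline{W_n}\subseteq U_n$, replacing the complete regularity used in Proposition 4.9 of \cite{PTV}.
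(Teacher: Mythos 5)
Your argument is correct, and it is essentially the proof the paper has in mind: the paper simply defers to Proposition 4.9 of \cite{PTV}, noting that weak regularity suffices in place of complete regularity, and your reconstruction (reduce to disjoint families via Proposition \ref{disj}, shrink to regular closed sets $\overline{W_n}\subseteq U_n$, use the $k$-space property to close up the tails $\bigcup_{n\ge m}\overline{W_n}$, then derive a contradiction from a cluster point of the decreasing open tails) is exactly that argument. The forward implication is routine, as you say.
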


\begin{cor}\label{ksp} If a topological group G is k-space, then $G$ is weakly compact-bounded if and only if $G$ is pseudocompact.
\end{cor}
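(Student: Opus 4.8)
The plan is to derive this directly from Proposition \ref{fk} by verifying that a topological group satisfies the hypotheses of that proposition. Proposition \ref{fk} asserts that a weakly regular $k$-space is weakly compact-bounded if and only if it is feebly compact; the hypothesis already supplies that $G$ is a $k$-space, so it remains only to check weak regularity and then to translate ``feebly compact'' into ``pseudocompact.''

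First I would recall that every Hausdorff topological group is a Tychonoff space (see, e.g., \cite{AT}); in particular $G$ is regular. I would then observe that any regular space is weakly regular: given a non-empty open set $U$ and a point $x \in U$, regularity produces an open $V$ with $x \in V \subseteq \overline{V} \subseteq U$, and the inclusions $V \subseteq \operatorname{int}(\overline{V}) \subseteq \overline{V}$ yield $\overline{V} = \overline{\operatorname{int}(\overline{V})}$, so that $\overline{V}$ is a non-empty regular closed subset contained in $U$. Hence $G$ is weakly regular.

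Being a weakly regular $k$-space, $G$ falls under Proposition \ref{fk}, which gives that $G$ is weakly compact-bounded if and only if $G$ is feebly compact. Finally, since $G$ is Tychonoff, the equivalence of feeble compactness and pseudocompactness recorded in the Introduction applies, so ``feebly compact'' may be replaced by ``pseudocompact,'' yielding the stated equivalence.

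There is no substantial obstacle here; the argument is a straightforward specialization of Proposition \ref{fk}. The only points requiring care are the two standard background facts---that Hausdorff topological groups are Tychonoff, hence regular, and that feeble compactness coincides with pseudocompactness on Tychonoff spaces---both of which are invoked rather than reproved.
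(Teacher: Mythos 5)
Your argument is correct and is exactly the intended derivation: the paper states this as an immediate corollary of Proposition \ref{fk}, relying on the facts that a Hausdorff topological group is Tychonoff (hence regular, hence weakly regular) and that feeble compactness coincides with pseudocompactness for Tychonoff spaces. Your write-up simply makes explicit the routine verifications the paper leaves unstated.
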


\begin{ques} Let G be a feebly compact paratopological group. If G is k-space (or Fr\'echet), Is G weakly compact-bounded?
\end{ques}

The following proposition is immediate from the definition of compact-boundedness.

\begin{prop}\label{isolated} If $X$ is compact-bounded, then the closure of a countable set of isolated points of $X$ is compact; in particular, if a compact-bounded space has a countable dense set of isolated points, then it is compact.
\end{prop}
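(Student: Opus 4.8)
The plan is to apply the definition of compact-boundedness directly to the singletons determined by the isolated points. First I would let $D=\{d_n : n\in\omega\}$ be the given countable set of isolated points. Since each $d_n$ is isolated, each singleton $\{d_n\}$ is a non-empty open subset of $X$, so $\mathcal{U}=\{\{d_n\} : n\in\omega\}$ is a countable family of non-empty open sets, exactly the kind of family to which compact-boundedness applies.

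Compact-boundedness then yields a compact set $K\subset X$ meeting every member of $\mathcal{U}$. Because each member is a singleton, $K\cap\{d_n\}\neq\emptyset$ is the same as $d_n\in K$; hence $D\subset K$. Since $X$ is Hausdorff, $K$ is closed, so $\overline{D}\subset K$. As a closed subset of the compact set $K$, the set $\overline{D}$ is itself compact, which gives the first assertion.

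For the final clause, if the countable set $D$ of isolated points is dense then $\overline{D}=X$, and the first part immediately gives that $X=\overline{D}$ is compact. The argument is short and I do not expect a genuine obstacle; the only two points demanding care are recognizing that ``meets each element'' forces the containment $D\subset K$ precisely because the sets in $\mathcal{U}$ are singletons, and invoking the standing Hausdorff hypothesis so that the compact set $K$ is closed and therefore contains $\overline{D}$.
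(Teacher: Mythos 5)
Your argument is correct and is exactly the one the paper has in mind: the paper simply declares the proposition ``immediate from the definition,'' and the intended reasoning is precisely your application of compact-boundedness to the family of singletons, followed by the observation that the compact set $K$ is closed (Hausdorff) and hence contains $\overline{D}$. No gaps; nothing further to add.
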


 The following implications trivially hold.
 
\smallskip		

\begin{center}
compact-bounded $\Longrightarrow$ weakly compact-bounded $\Longrightarrow$ feebly compact.
\end{center}

\smallskip

The following examples illustrate the differences between these spaces.

\begin{exam} There is a pseudocompact topological group  not weakly compact-bounded. 
\end{exam}

\begin{proof} Let $G$ be the topological group described in  Example 4.5 in \cite{Tka}.  Then, $G$ has the following properties:
\begin{enumerate}
\item $|G|=w(G)=2^{\omega}$;

\item $G$ is Boolean and pseudocompact;

\item  all countable subgroups of $G$ are $h$-embedded in $G$ (hence, closed in $G$);

\item $G$ has no non-trivial convergent sequences.
\end{enumerate}
As a consequence of (1)–(4), all compact subsets of $G$ are finite. Clearly, the group G fails to be weakly compact-bounded.
\end{proof}



\begin{exam}\label{Mr} There is a Tychonoff  weakly compact-bounded space that is not compact-bounded.
\end{exam}
\begin{proof} If $X=\Psi(\mathcal{A})$ is the Isbell-Mr\'owka space (where $\mathcal{A}$ is a maximal infinite almost disjoint family of infinite subsets of $\omega$), then  $X$ is locally compact and pseudocompact, thus it is weakly compact-bounded space (Proposition \ref{fk}).  But $\omega$ is a   countable dense subset of isolated points of $X$,  thus $X$ is not compact-bounded (Proposition \ref{isolated}). 
\end{proof}

\begin{ques} Is there a weakly compact-bounded  paratopological group that is not compact-bounded?
\end{ques}

\begin{prop}\label{imgreg} Suppose that X is a (weakly) compact-bounded space. Then 
\begin{enumerate}
\item any continuous image of X is (weakly) compact-bounded.

\item any regular closed subset of X is (weakly) compact-bounded.
\end{enumerate}
\end{prop}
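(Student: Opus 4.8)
The plan is to handle both parts by reducing to the defining property in $X$ and then transporting the resulting compact set. For part (a), let $f\colon X\to Y$ be a continuous surjection (it suffices to treat the image $f(X)$ with its subspace topology, where $f$ becomes onto) and let $\{U_n:n\in\omega\}$ be a countable family of non-empty open subsets of $Y$. By continuity each $f^{-1}(U_n)$ is open in $X$, and by surjectivity each is non-empty, so $\{f^{-1}(U_n):n\in\omega\}$ is a countable family of non-empty open sets to which compact-boundedness (resp. weak compact-boundedness) of $X$ applies. This yields a compact $K\subset X$ meeting each (resp. infinitely many) of the $f^{-1}(U_n)$. Then $f(K)$ is compact, and from $x\in K\cap f^{-1}(U_n)$ we obtain $f(x)\in f(K)\cap U_n$; hence $f(K)$ meets each (resp. infinitely many) $U_n$, giving the claim.

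For part (b), I would write the regular closed subset as $F=\overline{V}$ with $V$ open in $X$, and give $F$ the subspace topology. The key observation is that $V$ is dense in $F$, since the closure of $V$ in $F$ is $\overline{V}\cap F=F$. Given a countable family $\{W_n:n\in\omega\}$ of non-empty open subsets of $F$, choose open sets $O_n\subset X$ with $W_n=O_n\cap F$, and set $U_n:=O_n\cap V$. Because $V\subseteq F$, we have $U_n=W_n\cap V$, which is open in $X$, and density of $V$ in $F$ forces $U_n\neq\emptyset$. Applying the hypothesis in $X$ to $\{U_n:n\in\omega\}$ produces a compact $K\subset X$ meeting each (resp. infinitely many) $U_n$.

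To finish part (b) I would pass to $K\cap F$. As $F$ is closed in $X$, the set $K\cap F$ is a closed subset of the compact set $K$, hence compact, and it lies in $F$. For every index $n$ with $K\cap U_n\neq\emptyset$, any point $x\in K\cap U_n$ satisfies $x\in U_n\subseteq V\subseteq F$ and $x\in U_n\subseteq W_n$, so $x\in (K\cap F)\cap W_n$. Thus $K\cap F$ is a compact subset of $F$ meeting each (resp. infinitely many) $W_n$, which is exactly what compact-boundedness (resp. weak compact-boundedness) of $F$ requires.

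The routine facts—that preimages of open sets are open, and that continuous images and closed subsets of compacta are compact—are immediate. The only point demanding care is part (b): one must manufacture \emph{open} sets of the ambient space $X$ from the given open sets of the subspace $F$ without losing non-emptiness, and then trim the resulting compact set back into $F$. Both difficulties are resolved by exploiting that $F=\overline{V}$ is the closure of the open set $V$, so that intersecting with $V$ simultaneously stays inside $F$ and preserves non-emptiness by density. I expect this interplay between the subspace and ambient topologies to be the main obstacle, and everything else to be bookkeeping.
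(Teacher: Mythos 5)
Your proof is correct and follows essentially the same route as the paper: for (b) both arguments exploit that the interior of a regular closed set $F=\overline{V}$ is dense in $F$, so that intersecting the given open sets of $F$ with $V$ produces non-empty open sets of $X$, and then intersect the resulting compact set with $F$. You spell out part (a) (which the paper dismisses as trivial) and you work with arbitrary countable families rather than disjoint ones, which if anything handles the compact-bounded case more cleanly than the paper's write-up.
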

\begin{proof} For compact-bounded spaces, items (1) and (2) have been earlier established in Corollary 3.6 of \cite{AOT}.  Now, for weakly compact-bounded spaces, the proof of (1) is trivial.  We will show (2). Let $A$ be a regular closed subspace of $X$ and let $\{U_n \cap A: n\in \omega \}$ be a  disjoint family of non-empty open subsets of $A$, where $U_n$ is open in $X$ for each $n\in \omega$.
Since $\{ U_n \cap{\rm  int}(A):n\in \omega\}$ is a disjoint family of non-empty open subsets of $X$, then there is a compact subspace $K\subset X$ which intersects infinitely many elements of $\{ U_n \cap {\rm int}(A):n\in \omega\}$. Thus,   $A$ is weakly compact-bounded.
\end{proof}

The clause (1) of the following proposition has a simple proof which we omit. The clause (2) is proved in Corollary 3.6 of \cite{AOT}. 

\begin{prop}\label{finden} Let X be a space.
\begin{enumerate}
\item If X is a finite union of (weakly) compact-bounded spaces, then X is (weakly) compact-bounded.

\item If X contains a dense (weakly)  compact-bounded space, then X is (weakly) compact-bounded.
\end{enumerate}
\end{prop}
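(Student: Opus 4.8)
The plan is to handle both parts by reducing a given countable family of open sets in $X$ to an analogous family on the relevant subspaces and then transporting the compact witness back up, using only that compactness is absolute (a compact subset of a subspace is compact in $X$).

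For part (a), suppose $X=Y_1\cup\cdots\cup Y_n$ with each $Y_i$ (weakly) compact-bounded, and let $\{U_m:m\in\omega\}$ be a countable family of non-empty open subsets of $X$. I would partition the index set by setting $A_i=\{m\in\omega:U_m\cap Y_i\neq\emptyset\}$; since each $U_m$ is non-empty and the $Y_i$ cover $X$, every $U_m$ meets some $Y_i$, so $\bigcup_{i=1}^n A_i=\omega$. In the compact-bounded case, for each $i$ apply the hypothesis to the countable family $\{U_m\cap Y_i:m\in A_i\}$ of non-empty open subsets of $Y_i$ to get a compact $K_i\subseteq Y_i$ meeting every member, and take $K=\bigcup_{i=1}^n K_i$: this is compact as a finite union of compacta, and for each $m$ we have $m\in A_i$ for some $i$, whence $K\cap U_m\supseteq K_i\cap(U_m\cap Y_i)\neq\emptyset$. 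In the weakly compact-bounded case, the pigeonhole principle gives some $i_0$ with $A_{i_0}$ infinite; applying the hypothesis to $\{U_m\cap Y_{i_0}:m\in A_{i_0}\}$ produces a compact $K\subseteq Y_{i_0}$ meeting infinitely many of these traces, hence infinitely many of the $U_m$.

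For part (b), let $D$ be a dense (weakly) compact-bounded subspace of $X$ and let $\{U_m:m\in\omega\}$ be a countable family of non-empty open subsets of $X$. Density ensures that each $U_m\cap D$ is a non-empty open subset of $D$, so $\{U_m\cap D:m\in\omega\}$ is an admissible countable family in $D$. The hypothesis yields a compact $K\subseteq D$ meeting each (respectively, infinitely many of the) sets $U_m\cap D$; since $K$ is compact in $D$ it is compact in $X$, and it meets each (respectively, infinitely many of the) $U_m$, as required.

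As the paper already signals by omitting the argument, there is no genuine obstacle here; the proof is bookkeeping. The only points requiring a moment's care are that the trace of a non-empty open set of $X$ on a dense subspace remains non-empty and open, that compactness passes freely between a subspace and $X$, and—for the weak version of part (a)—that finitely many index sets covering $\omega$ force one of them to be infinite, which is exactly where the distinction between ``meets each'' and ``meets infinitely many'' is reconciled.
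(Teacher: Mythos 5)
Your proof is correct, and it is exactly the routine bookkeeping argument the paper has in mind when it omits the proof: partition the indices according to which piece of the finite union each open set meets (using pigeonhole for the weak version), respectively pass to traces on the dense subspace, and use that compact subsets of a subspace are compact in $X$. No gaps; the edge cases you flag (non-emptiness of traces on a dense set, absoluteness of compactness, one index class being infinite) are precisely the only points needing care.
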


\begin{prop}\label{subCB} For any topological space X, the following are equivalent:
\begin{enumerate}
\item X is weakly compact-bounded;

\item For any  family $\mathcal{U}$ of non-empty open subsets of X, there is a weakly compact-bounded subspace $A\subset X$ such that $A$ meets infinitely many elements  of $\mathcal{U}$. 
\end{enumerate}
\end{prop}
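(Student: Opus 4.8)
The plan is to prove the two implications separately, noting that one direction is essentially trivial while the other carries all the content.

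For $(1)\Rightarrow(2)$ I would simply take $A=X$. Since $X$ is weakly compact-bounded by hypothesis and every member of $\mathcal U$ is a non-empty open set, $X$ meets \emph{every} element of $\mathcal U$; provided $\mathcal U$ is infinite (which is forced anyway if the conclusion ``$A$ meets infinitely many elements of $\mathcal U$'' is to be satisfiable at all), this gives at once a weakly compact-bounded subspace meeting infinitely many members of $\mathcal U$. So nothing beyond the hypothesis is needed here.

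The real work is $(2)\Rightarrow(1)$, and here I would invoke Proposition \ref{disj} to restrict attention to disjoint families. By that proposition, to prove $X$ weakly compact-bounded it suffices to take an arbitrary countable family $\{U_n:n\in\omega\}$ of mutually disjoint non-empty open subsets of $X$ and produce a compact $K\subset X$ meeting infinitely many of them. Applying hypothesis (2) to this family $\mathcal U=\{U_n:n\in\omega\}$, I obtain a weakly compact-bounded subspace $A\subset X$ meeting infinitely many of the $U_n$; let $N=\{n\in\omega:U_n\cap A\neq\emptyset\}$, which is an infinite subset of $\omega$.

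Then $\{U_n\cap A:n\in N\}$ is a countable family of non-empty open subsets of $A$, and it is mutually disjoint because the $U_n$ are. Since $A$ is weakly compact-bounded, Proposition \ref{disj} applied to $A$ yields a compact subspace $K\subset A$ intersecting infinitely many members of $\{U_n\cap A:n\in N\}$. As compactness is absolute (the subspace topology $K$ inherits from $A$ agrees with the one it inherits from $X$), $K$ is a compact subspace of $X$, and it clearly meets infinitely many of the original $U_n$; by the converse direction of Proposition \ref{disj} this shows $X$ is weakly compact-bounded. The only points demanding care are purely bookkeeping: that passing from $U_n$ to $U_n\cap A$ preserves non-emptiness (for $n\in N$), disjointness, and openness in $A$, and that ``infinitely many'' survives each restriction. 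I expect no serious obstacle, since the hypothesis is applied to exactly the family for which the conclusion is needed, and the witnessing subspace $A$ then supplies the compact set via its own weak compact-boundedness.
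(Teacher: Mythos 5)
Your proof is correct and follows essentially the same route as the paper's: the forward implication is immediate (the paper takes a compact set as the witness $A$, you take $A=X$ itself, both trivially valid), and the reverse implication applies hypothesis (2) to the given family, then uses the weak compact-boundedness of the resulting subspace $A$ on the traces $U_n\cap A$ to extract the compact set. Your extra detour through Proposition \ref{disj} to reduce to disjoint families is harmless but unnecessary, since the definition already quantifies over arbitrary countable families of non-empty open sets.
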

\begin{proof}(1)$\Rightarrow$(2) is clear since a compact space is  weakly compact-bounded. 

(2)$\Rightarrow$(1)  Let $\mathcal{U}=\{U_n:n\in \omega \}$ be a family  of  non-empty open subsets of $X$ and let $A$ be a weakly compact-bounded subspace of $X$ such that $A$ meets infinitely-many elements  of $\mathcal{U}$, that is, there is an infinite subset $I\subset \omega$  such that $A\cap U_m\neq \emptyset$ for each $m\in I$. Since $\{A\cap U_m:m\in I\}$ is a  family  of  non-empty open subsets of $A$, there is a compact subset $K_A$ in $A$ (thus in $X$) such that $\emptyset \neq K_A\cap A\cap U_m \subset K_A\cap U_m $ for each $m\in I$, that is, $K_A$ meets infinitely many elements  of $\mathcal{U}$.
\end{proof}

The next fact has a proof similar to the previous proposition.

\begin{prop}\label{subCB} For any topological space X, the following are equivalent:
\begin{enumerate}
\item X is compact-bounded;

\item For any infinite family $\mathcal{U}$ of non-empty open subsets of X, there is a  compact-bounded subspace $A\subset X$  such that $A$ meets every element  of $\mathcal{U}$. 
\end{enumerate}
\end{prop}

A space $X$ is said to have a \textit{regular $G_\delta$-diagonal} if the diagonal $\Delta_X=\{(x,x): x\in X \}$  can be represented as the intersection of the closures of a countable family of open neighbourhoods of $\Delta_X$ in $X\times X$.

We end this section with the following fact about (weakly) compact-bounded  and first-countable subsets of  topological groups.

\begin{prop} Let G be a topological group.  Then,  the following assertions are equivalent.
\begin{enumerate}
\item every (weakly) compact-bounded  subset of G is first-countable;

\item every (weakly) compact-bounded subset of G is metrizable.
\end{enumerate}
\end{prop}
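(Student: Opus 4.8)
The implication (2) $\Rightarrow$ (1) is immediate, since every metrizable space is first-countable; all the content lies in (1) $\Rightarrow$ (2), and the argument is uniform in the two cases, so I describe it once for a ``(weakly) compact-bounded'' subset $A$.

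\smallskip

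Fix such a subset $A\subseteq G$. By the implications recorded after Proposition \ref{isolated}, $A$ is feebly compact, and being a subspace of a (Hausdorff) topological group it is Tychonoff, hence pseudocompact; by hypothesis (1) it is also first-countable. The plan is to show that $A$ carries a \emph{regular $G_\delta$-diagonal} and then to invoke the known metrization theorem that a feebly compact (pseudocompact) space with a regular $G_\delta$-diagonal is metrizable — whence, metrizable pseudocompact spaces being compact, $A$ is in fact compact and metrizable. It is convenient to first pass to the closure: since $A$ is dense in $cl_G(A)$, Proposition \ref{finden}(b) shows that $cl_G(A)$ is again (weakly) compact-bounded, hence first-countable by (1); as metrizability of $cl_G(A)$ gives that of $A$, I may assume $A$ is closed in $G$.

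\smallskip

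To manufacture the diagonal I would exploit the group structure through the continuous map $\delta\colon A\times A\to G$, $\delta(x,y)=x^{-1}y$, for which $\delta^{-1}(e)=\Delta_A$. Given a decreasing sequence $\{V_n:n\in\omega\}$ of open symmetric neighbourhoods of $e$ with $V_{n+1}V_{n+1}\subseteq V_n$, one has $cl_G(V_{n+1})\subseteq V_n$ and therefore $\bigcap_n cl_G(V_n)=\bigcap_n V_n$. Putting $O_n=\delta^{-1}(V_n)$ gives open neighbourhoods of $\Delta_A$ in $A\times A$ whose closures satisfy $cl_{A\times A}(O_n)\subseteq \delta^{-1}(cl_G(V_n))$ by continuity of $\delta$, so that $\bigcap_n cl_{A\times A}(O_n)=\delta^{-1}\big(\bigcap_n cl_G(V_n)\big)\cap(A\times A)$. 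Thus the whole construction reduces to choosing one countable family $\{V_n\}$ for which $\big(\bigcap_n V_n\big)\cap A^{-1}A=\{e\}$, i.e.\ a countable-pseudocharacter condition for $e$ inside the set of ``differences'' $A^{-1}A$; any such family can be refined to be decreasing, symmetric, and to satisfy the square condition for free, and it then yields the regular $G_\delta$-diagonal of $A$.

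\smallskip

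The main obstacle is precisely the production of this single countable family valid along the whole of $\Delta_A$. The first-countability granted by (1) is a pointwise statement about $A$, and this alone is insufficient: the Isbell-Mr\'owka space $\Psi(\mathcal A)$ of Example \ref{Mr}, embedded in its free topological group, is a first-countable (weakly) compact-bounded subspace of a topological group that is \emph{not} metrizable. Consequently the proof must use (1) \emph{collectively} rather than for $A$ alone — applying it to the compact subsets of $A$ and to the difference sets built from them, where compactness makes the relevant continuous images compact and hence, by (1), first-countable — and then use the feeble compactness of $A$ to amalgamate the resulting pointwise neighbourhood bases of $e$ into one countable family separating $e$ from all of $A^{-1}A$. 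Once this regular $G_\delta$-diagonal is secured, the cited metrization theorem closes the argument and shows that every (weakly) compact-bounded subset of $G$ is compact and metrizable, giving (2).
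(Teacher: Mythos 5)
Your overall architecture is the same as the paper's: use the map $(x,y)\mapsto x^{-1}y$ to pull a countable family of neighbourhoods of $e$ back to a regular $G_\delta$-diagonal on $A$, then invoke the Arhangel'skii--Burke theorem (Corollary 4 in \cite{AB}) to conclude that the feebly compact space $A$ is compact and metrizable. You also correctly diagnose that first-countability of $A$ alone cannot suffice and that the hypothesis must be applied to some auxiliary set. But at exactly that point -- which you yourself flag as ``the main obstacle'' -- your argument stops being a proof: you assert that one should apply (1) to ``the compact subsets of $A$ and to the difference sets built from them'' and then ``use the feeble compactness of $A$ to amalgamate the resulting pointwise neighbourhood bases of $e$ into one countable family separating $e$ from all of $A^{-1}A$.'' No such amalgamation argument is given, and it is not clear one exists along those lines: first-countability of each compact piece gives local countable bases in those pieces, and feeble compactness does not by itself let you merge countably many point-dependent choices into a single countable family of neighbourhoods of $e$ whose closures meet $A^{-1}A$ only in $e$.

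The missing idea is simpler and sharper: the set $A^{-1}A$ is \emph{itself} (weakly) compact-bounded, so hypothesis (1) applies to it directly. This is because $A\times A$ is (weakly) compact-bounded -- a product theorem, Theorem 4.4 in \cite{AMW}, which is the one nontrivial external input -- and $A^{-1}A=g(A\times A)$ is its continuous image, hence (weakly) compact-bounded by Proposition \ref{imgreg}(a). By (1), $A^{-1}A$ is first-countable, so $e$ has a countable neighbourhood base $\{W_n\}$ \emph{in the subspace} $A^{-1}A$, and Hausdorffness gives $\bigcap_n cl_{A^{-1}A}(W_n)=\{e\}$; pulling back under $f=g|_{A\times A}$ yields the regular $G_\delta$-diagonal. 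Note also that the paper works with neighbourhoods of $e$ in the subspace $A^{-1}A$, not with neighbourhoods $V_n$ of $e$ in $G$ as you propose; your formulation would additionally require $e$ to have countable pseudocharacter in $G$ relative to $A^{-1}A$, which is a stronger and unestablished condition. Without the product theorem and the observation that $A^{-1}A$ is (weakly) compact-bounded, the proof is incomplete.
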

\begin{proof} Clearly, it will suffice to show the sufficiency.  Let $A$ be a weakly compact-bounded (compact-bounded) subset of $G$, and let
$g: G\times G \to G$ be a map defined by $g(x,y)=x^{-1}y$ for every $x,y \in G$ (we note that $g$ is a continuous mapping). By Theorem 4.4 in \cite{AMW},  we have that $A\times A$ is weakly compact-bounded (compact-bounded), hence $g(A\times A)=A^{-1}A$ is weakly compact-bounded (compact-bounded), and thus $A^{-1}A$ is first countable.

Since the neutral element $e$ is in $ A^{-1}A$, there is a family $\{W_n : n\in \omega\}$ of non-empty open neighborhoods of $e$ in $A^{-1}A$ such that $\{ e\}=\bigcap \{W_n : n\in \omega\}=\bigcap \{cl_{ A^{-1}A}(W_n) : n\in \omega\}$. If $f=g|(A\times A)$, then $\Delta_A =f^{-1}(\{ e\})$. Now, we have 

$$
f^{-1}(\{ e\})=\bigcap \{ f^{-1}(W_n): n\in \omega\} \subseteq \bigcap \{cl_{ A\times A} (f^{-1}(W_n)): n\in \omega\} \subseteq 
$$
$$
\subseteq \bigcap  \{ f^{-1}(cl_{A^{-1}A}(W_n)): n\in \omega\}= f^{-1} \left( \bigcap  \{ cl_{A^{-1}A}(W_n): n\in \omega\} \right).
$$
\medskip

 On the other hand,  $f^{-1}(\{ e\})=f^{-1}(\bigcap  \{ cl_{A^{-1}A}(W_n): n\in \omega\})$. Thus, $\Delta_A = f^{-1}(\{ e\})= \bigcap \{cl_{ A\times A} (f^{-1}(W_n)): n\in \omega\}$,  that is, $A$ has a regular $G_\delta$-diagonal. By Corollary 4 in \cite{AB}, we conclude that $A$ is compact and metrizable.
\end{proof}

\section{When a  (weakly) compact-bounded paratopological group is a topological group}

The following examples illustrate that not every weakly compact-bounded paratopological group  is a topological group:

\begin{exam}There exists a  weakly compact-bounded paratopological group  which is not a topological group.
\end{exam}
\begin{proof} The paratopological group $H$ described in the Example 2.7.10 in \cite{lib} is  feebly compact and second countable, thus it is weakly compact-bounded (Proposition \ref{fk}). However, it is not a topological group.
\end{proof}

For a paratopological group $(G,\tau)$, the family $\{U^{-1}: U\in \tau \}$ defines a topology $\tau^{-1}$, named the \textit{conjugate} of $\tau$ and the pair $(G,\tau^{-1})$ is a paratopological group homeomorpic to $(G,\tau)$.

We will denote by  $\Delta_G $ the diagonal of the direct product $(G,\tau^{-1})\times (G,\tau)$.  By Lemma 2.2 in \cite{AS}, we know that if $G$ is a paratopological group, then the diagonal $\Delta_G$  of the product $(G, \tau^{-1} )\times (G,\tau)$ is a topological group.  Moreover,  $\Delta_G$ is closed in $(G,\tau)\times (G,\tau^{-1})$ whenever  $(G,\tau)$ is a paratopological group satisfying the $T_1$ separation axiom.

\begin{prop}  Let G be a paratopological group with topology $\tau$. If  the diagonal $\Delta_G \subset (G, \tau^{-1}) \times (G,\tau)$ is (weakly) compact-bounded, then $G$ is a (weakly) compact-bounded topological group.
\end{prop}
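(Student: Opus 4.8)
The plan is to separate the two conclusions and to exploit an identification of the diagonal with a join topology. First I would note that, via $g\mapsto(g,g)$, the subspace $(\Delta_G,\tau^{-1}\times\tau)$ is homeomorphic to $(G,\tau^{*})$, where $\tau^{*}:=\tau\vee\tau^{-1}$: a basic product neighbourhood $U^{-1}\times V$ with $U,V\in\tau$ meets $\Delta_G$ in $\{(g,g):g\in U^{-1}\cap V\}$, and the sets $U^{-1}\cap V$ form a base for $\tau^{*}$. Since $(\tau^{*})^{-1}=\tau^{*}$, this is consistent with Lemma 2.2 of \cite{AS}, which makes $(G,\tau^{*})$ a topological group. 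Throughout I would use that $\tau\subseteq\tau^{*}$ and $\tau^{-1}\subseteq\tau^{*}$, and that the restrictions to $\Delta_G$ of the two coordinate projections are continuous surjections onto $(G,\tau)$ and $(G,\tau^{-1})$.

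For the boundedness conclusion, consider the restriction of the second projection $p_2|_{\Delta_G}\colon(\Delta_G,\tau^{-1}\times\tau)\to(G,\tau)$; under the identification above it is just the continuous identity $(G,\tau^{*})\to(G,\tau)$, and it is onto. Thus $(G,\tau)$ is a continuous image of the (weakly) compact-bounded space $\Delta_G$, and Proposition \ref{imgreg} immediately yields that $(G,\tau)$ is (weakly) compact-bounded.

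For the topological-group conclusion I would argue through $2$-pseudocompactness. Since every weakly compact-bounded space is feebly compact (the implications recorded after Proposition \ref{isolated}), the diagonal, and hence $(G,\tau^{*})$, is a feebly compact topological group, so it is pseudocompact. Given any decreasing sequence $\{U_n:n\in\omega\}$ of non-empty $\tau$-open sets, the sets $U_n^{-1}$ form a decreasing sequence of non-empty $\tau^{*}$-open sets, and feeble compactness of $(G,\tau^{*})$ gives $\emptyset\neq\bigcap_n cl_{\tau^{*}}(U_n^{-1})\subseteq\bigcap_n cl_{\tau}(U_n^{-1})$. This is precisely the $2$-pseudocompactness of $(G,\tau)$, and a $2$-pseudocompact paratopological group is a topological group; together with the previous paragraph, $G$ is a (weakly) compact-bounded topological group.

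The step I expect to be the main obstacle is this last passage. The continuous-image argument is routine and the identification of the diagonal is essentially bookkeeping, but the continuity of inversion rests entirely on the theorem that $2$-pseudocompactness forces a topological group, together with the correct translation of feeble compactness of $(G,\tau^{*})$ into the conjugate-closure condition that defines $2$-pseudocompactness. The delicate point is to keep careful track of which of $\tau$, $\tau^{-1}$ and $\tau^{*}$ each open family and each closure is taken in, since all three topologies appear in the same computation.
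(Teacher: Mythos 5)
Your reduction of compact-boundedness is fine, and in fact slightly cleaner than the paper's: you obtain $(G,\tau)$ as a continuous image of the diagonal via the second projection and invoke Proposition \ref{imgreg}(a) directly, whereas the paper first proves $G$ is a topological group, uses $\tau=\tau^{-1}$, and then identifies the diagonal with $G$. Your identification of $(\Delta_G,\tau^{-1}\times\tau)$ with $(G,\tau\vee\tau^{-1})$ and your derivation of $2$-pseudocompactness of $(G,\tau)$ from feeble compactness of $(G,\tau\vee\tau^{-1})$ are also correct.

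The gap is the final inference: ``a $2$-pseudocompact paratopological group is a topological group'' is not a theorem you can cite --- it is a well-known open problem of Ravsky, and the partial results in \cite{BR} establish it only under additional hypotheses. The paper itself is explicit about this: its Section 3 corollary, after producing $2$-pseudocompactness of a compact-bounded paratopological group, still needs to assume $\omega$-precompactness, topological periodicity, or countable pseudocharacter (via Propositions 3.15 and 3.24 of \cite{BR}) before concluding that $G$ is a topological group. So your argument for continuity of inversion rests on an unproved statement, and no amount of bookkeeping among $\tau$, $\tau^{-1}$ and $\tau^{*}$ will close it. The paper avoids this entirely by using Lemma 2.3 of \cite{AS}, which says precisely that pseudocompactness of the diagonal $(\Delta_G,\tau^{-1}\times\tau)$ (available here because the diagonal is a feebly compact Tychonoff topological group by Lemma 2.2 of \cite{AS}) already forces $(G,\tau)$ to be a topological group. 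Replacing your $2$-pseudocompactness step by that citation repairs the proof.
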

\begin{proof} Suppose that the subgroup $\Delta_G$ of $(G, \tau^{-1}) \times (G, \tau)$ is (weakly) compact-bounded. Since $G$ is a paratopological group, we have that $\Delta_G$ is a topological group (Lemma 2.2 in \cite{AS}), furthermore, since $\Delta_G$ is (in particular) feebly compact, then $\Delta_G$ is pseudocompact, by Lemma 2.3 in \cite{AS}, we conclude that $G$ is a topological group. 

Now, since $G$ is a topological group, clearly $\tau = \tau^{-1}$, 
whence it follows that $\Delta_G$ is a topological group topologically isomorphic to $G$. Hence, $G$  is (weakly) compact-bounded.
\end{proof}

A paratopological group $G$ is \textit{2-pseudocompact} if  whenever $\{U_n :n\in \omega\}$ is a decreasing sequence of non-empty open subsets of $G$, we have $\bigcap \{ cl_G(U_n^{-1}):n\in \omega \}   \neq \emptyset$.

\begin{prop} Let $G$ be a compact-bounded paratopological group. If for every compact subset $K\subset G$, the set  $K^{-1}$ is countably compact in $G$, then $G$ is 2-pseudocompact.
\end{prop}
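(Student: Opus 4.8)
The plan is to exploit compact-boundedness to trap the whole sequence $\{U_n\}$ inside a single compact set, and then transport the resulting points to the ``inverse side,'' where the hypothesis on $K^{-1}$ supplies the required cluster point.

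First I would take a decreasing sequence $\{U_n : n\in\omega\}$ of non-empty open subsets of $G$. Regarding it as a countable family of non-empty open sets and invoking compact-boundedness, I obtain a compact subset $K\subseteq G$ that meets every $U_n$; for each $n$ I choose a point $x_n\in K\cap U_n$. Next I pass to inverses: since $x_n\in K$ we have $x_n^{-1}\in K^{-1}$, and by hypothesis $K^{-1}$ is countably compact in $G$. Hence the sequence $(x_n^{-1})_{n\in\omega}$, which lies entirely in $K^{-1}$, has a cluster point $p\in K^{-1}\subseteq G$, using that in a countably compact space every sequence clusters.

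It then remains to verify $p\in\bigcap_{n}cl_G(U_n^{-1})$. Fixing $m$, the decreasing property gives $U_n\subseteq U_m$ for every $n\geq m$, so $x_n^{-1}\in U_n^{-1}\subseteq U_m^{-1}$ for all $n\geq m$; thus the tail $\{x_n^{-1}:n\geq m\}$ is contained in $U_m^{-1}$. Since $p$ is a cluster point of the whole sequence, every neighbourhood of $p$ contains $x_n^{-1}$ for infinitely many $n$, hence for infinitely many $n\geq m$, which forces $p\in cl_G(U_m^{-1})$. As $m$ was arbitrary, $p\in\bigcap_{n}cl_G(U_n^{-1})\neq\emptyset$, which is exactly the definition of $2$-pseudocompactness.

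The argument is essentially mechanical once the construction is in place; the step I expect to be the crux is the bookkeeping of the two inversions, namely the decision to invert the chosen points $x_n$ and to read off the cluster point inside $K^{-1}$ rather than $K$. This is forced by the shape of the hypotheses: it is $K^{-1}$ (not $K$) that is assumed countably compact, and it is the sets $U_n^{-1}$ (not $U_n$) that appear in the definition of $2$-pseudocompactness, so matching these two inversions is precisely what makes the assumption on $K^{-1}$ perform exactly the work required.
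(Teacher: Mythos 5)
Your proof is correct and follows essentially the same route as the paper's: compact-boundedness yields a compact set meeting every $U_n$, and the countable compactness of its inverse then produces a point of $\bigcap_n cl_G(U_n^{-1})$. The only (cosmetic) difference is that you select points $x_n\in K\cap U_n$ and use the cluster-point characterization of countable compactness, whereas the paper works directly with the decreasing family of non-empty closed sets $F^{-1}\cap cl_G(U_n^{-1})$ and the finite intersection property.
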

\begin{proof} Let $\{U_n :n\in \omega\}$ be a decreasing sequence of non-empty open subsets of $G$. Then, there is a compact set $F$ of $G$ such that $F\cap U_n \neq \emptyset $ for each $n\in \omega$.  We note that since $F$ is compact in $G$,  we have that $cl_G(F^{-1})=F^{-1}$ (Lemma 2.3.5 in \cite{AT}).  It follows that  $\emptyset \neq  F^{-1}\cap U_n^{-1} \subset cl_G({F^{-1}\cap U_n^{-1}}) \subset cl_G({F^{-1}})\cap cl_G({U_n^{-1}}) \subset F^{-1}\cap cl_G({U_n^{-1}})$
 for each $n\in \omega$. Clearly,  the family $\{ F^{-1}\cap cl_G(U_n^{-1}) : n\in \omega\}$ is a decreasing sequence of non-empty closed subsets of the countably compact set $K^{-1}$. Thus,  we can conclude that
$$
\emptyset \neq \bigcap\{ F^{-1}\cap cl_G(U_n^{-1}): n\in \omega \}\subset \bigcap   \{ cl_G(U_n^{-1}): n\in \omega \}.
$$
Therefore, $G$ is 2-pseudocompact.
\end{proof}

In \cite{BR}, a semitopological group $G$ is \textit{left $\omega$-precompact} if for each neighborhood $U$ of the identity of $G$ there is a countable subset $F$ of $G$ such that $FU=G$.  Recall that a paratopological group $G$ is \textit{topologically periodic} if for each $x\in G$ and a neighborhood $U$ of the identity of $G$ there is $n\geq 1$ such that $x^n \in U$.  Since every 2-pseudocompact paratopological group is a Baire space (Theorem 2.2 in \cite{AS}),  the Propositions 3.15 and 3.24 in \cite{BR} provide some conditions under which a  compact-bounded paratopological group is a topological group.

\begin{cor} Let G be a compact-bounded paratopological group such that the set $K^{-1}$ is countably compact in G for each compact subset $K\subset G$. If at least one of the following condition holds:
\begin{enumerate}
\item G is left $\omega$-precompact;

\item G is topologically periodic;

\item G has countable pseudocharacter.

\end{enumerate}
Then, $G$ is a topological group.  
\end{cor}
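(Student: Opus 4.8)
The plan is to reduce everything to the 2-pseudocompactness established in the immediately preceding proposition and then to feed this into the machinery of \cite{BR}. First I would observe that the standing hypotheses of this corollary---that $G$ is a compact-bounded paratopological group in which $K^{-1}$ is countably compact for every compact $K\subset G$---are exactly the hypotheses of the previous proposition. Hence that proposition applies verbatim and yields that $G$ is 2-pseudocompact. This disposes of all the internal work: no fresh argument about countable families of open sets or about intersecting compact subsets is required at this stage, since the whole point of stating the preceding proposition separately was to isolate precisely this implication.

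The second step is to convert 2-pseudocompactness into a Baire-category property. As recorded in the remark preceding the statement, every 2-pseudocompact paratopological group is a Baire space, so $G$ is Baire. With $G$ now known to be a Baire paratopological group, I would invoke Propositions 3.15 and 3.24 of \cite{BR}, which guarantee that a Baire paratopological group that is additionally $\omega$-precompact, topologically periodic, or of countable pseudocharacter is in fact a topological group. Applying whichever of the conditions (1)--(3) is assumed then delivers the conclusion directly.

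The main obstacle here is not a genuine mathematical difficulty but a matter of bookkeeping and hypothesis-matching: one must verify that ``Baire plus one of these three properties'' is exactly the form in which Propositions 3.15 and 3.24 are stated in \cite{BR}, and that the Hausdorff ($T_1$) separation assumed throughout the paper is compatible with those statements (in particular for the countable-pseudocharacter case, where separation typically enters). Once the hypotheses are aligned, the argument is a clean concatenation of three implications: compact-bounded together with the inverse-countable-compactness condition gives 2-pseudocompact (previous proposition); 2-pseudocompact gives Baire; and Baire together with any of (1)--(3) gives a topological group, which is the assertion of the corollary.
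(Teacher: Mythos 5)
Your proposal matches the paper's argument exactly: the corollary is deduced by applying the preceding proposition to obtain 2-pseudocompactness, noting that every 2-pseudocompact paratopological group is Baire, and then invoking Propositions 3.15 and 3.24 of \cite{BR} for each of the three conditions. No further comment is needed.
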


\section{Quotients}

It is known that if $G$ is a topological group and $H$ is a compact subgroup of $G$, then the quotient mapping $\pi:G\to G/H$ is a perfect map, L. S.  Pontryagin used this fact in \cite{P}.  This fact helps to extend some compactness-like properties from $G/H$ to $G$, which are inverse invariants of perfect mappings, such as local compactness, countable compactness,  paracompactness, the Lindelöf property,  $\sigma$-compactness and \v{C}ech-completeness.

The Mrówka space $\Psi(\mathcal{D})$ (where $\mathcal{D}$ is an infinite maximal almost disjoint family on $\omega$) is a weakly compact-bounded space. The subspace $\mathcal{D}$ of $\Psi(\mathcal{D})$ is closed, discrete and infinite (thus, not pseudocompact). That is, weak compact-boundedness is not hereditary with respect to closed subspaces.

It follows, from Lemma \ref{imgreg}-(2) and Theorem 3.7.29 in \cite{E}, that this property is not inverse invariant of perfect mappings. Thus, it is not immediate  that if the group $G/H$ is weakly compact-bounded and $H$ is compact, then $G$ is weakly compact-bounded. However, the following fact shows a way to prove this.

\begin{prop}\label{quotient}  Let G be a topological group and H be a  compact subgroup of G such that the quotient space $G/H$ has some of the following properties:
\begin{enumerate}
\item G/H is weakly compact-bounded,

\item G/H is compact-bounded.
\end{enumerate}
Then, G also has the same property, that is, in case (1) G is weakly compact-bounded, in case (2) G is compact-bounded.
\end{prop}
\begin{proof} We will show (2) (the proof of (1)  is similar). Let $\{U_n :n\in \omega\}$ be a family of non-empty open sets in $ G$. Then, $\{\pi (U_n) :n\in \omega\}$ is a family of non-empty open sets in $G/H$, thus there is a compact set $E$ in $G/H$ such that $\pi(U_n) \cap E \neq \emptyset $ for each $n\in \omega$.  We note that the set $K=\pi^{-1}(E)$ is  a compact in $ G$, because $\pi:G\to G/H$ is a perfect map (Theorem 3.7.2 in \cite{E}). 

Since the map $\pi$ is surjective, we have that $ \pi(U_n) \cap E =\pi(U_n) \cap \pi(\pi^{-1}(E))=\pi(U_n) \cap \pi(K) \neq \emptyset $ for each $n\in \omega$. It follows that, for each $n\in \omega$, there are  $x_n\in U_n$ and $k_n \in K$, such that $\pi(x_n)=\pi(k_n)$, thus  $x_n \in k_n H\subset KH$ for each $n\in \omega$. Since $KH$ is compact in $ G$ (Proposition 1.4.31 in \cite{AT}), we have that the set $F=cl(\{x_n:n\in \omega\})$ is also compact in $G$ and clearly, $F\cap U_n \neq \emptyset $ for each $n\in \omega$.  Therefore,  $G$ is compact-bounded topological group.
\end{proof}

By Theorem 2.4 in \cite{A} and Proposition \ref{ksp}, we have:

\begin{cor} Let G be a topological group and H be a  compact subgroup of G such that $G/H$ is  a pseudocompact  k-space. Then G is weakly compact-bounded. 
\end{cor}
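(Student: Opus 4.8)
The plan is to deduce the statement from Proposition \ref{quotient}-(1) rather than to lift pseudocompactness directly through $\pi\colon G\to G/H$. Since $H$ is compact, $\pi$ is a perfect map, but as the discussion preceding Proposition \ref{quotient} stresses, weak compact-boundedness (indeed feeble compactness) is not an inverse invariant of perfect maps, so the hypotheses on $G/H$ cannot simply be transported to $G$ along $\pi$; this is exactly the gap that Proposition \ref{quotient} is designed to bridge. Accordingly, the whole argument reduces to a single point: verify that the coset space $G/H$ is weakly compact-bounded, after which Proposition \ref{quotient}-(1) immediately yields that $G$ is weakly compact-bounded.

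To establish that $G/H$ is weakly compact-bounded, I would first recall that the coset space of a topological group by a closed subgroup is Tychonoff, and in particular weakly regular, so it meets the standing hypothesis of Proposition \ref{fk}. By assumption $G/H$ is a pseudocompact $k$-space, hence a feebly compact, weakly regular $k$-space. Combining Theorem 2.4 in \cite{A} with Proposition \ref{ksp} (itself an instance of Proposition \ref{fk}), the pseudocompact $k$-space $G/H$ is weakly compact-bounded. Feeding this into Proposition \ref{quotient}-(1) then completes the proof.

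The main obstacle is the middle step, namely justifying that the characterization of Proposition \ref{ksp} --- stated for topological groups that are $k$-spaces --- is available for $G/H$, which is only a coset space and need not carry a group structure unless $H$ is normal in $G$. What rescues the argument is that $G/H$ is nonetheless homogeneous and Tychonoff, hence weakly regular, so that Proposition \ref{fk} applies verbatim; Theorem 2.4 in \cite{A} supplies precisely the structural information about the coset space required to run this instance of Proposition \ref{fk}. Once weak compact-boundedness of $G/H$ is secured, everything else is routine and is already encapsulated in Proposition \ref{quotient}-(1).
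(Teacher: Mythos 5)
Your argument is correct, but it is not the route the paper takes. The paper's one-line derivation lifts the hypotheses from $G/H$ up to $G$: Theorem 2.4 in \cite{A}, combined with the fact that the quotient map modulo a compact subgroup is perfect and open, is used to conclude that $G$ itself is a pseudocompact $k$-space, after which Corollary \ref{ksp} is applied directly to the topological group $G$. You instead apply the characterization downstairs and then transport the conclusion: since $H$ is closed, $G/H$ is regular (hence weakly regular), so the pseudocompact $k$-space $G/H$ is a weakly regular feebly compact $k$-space and Proposition \ref{fk} --- which requires no group structure --- makes it weakly compact-bounded; Proposition \ref{quotient}-(1) then yields that $G$ is weakly compact-bounded. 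Your route has the merit of staying entirely inside the machinery the paper has already built (Propositions \ref{fk} and \ref{quotient}) and of sidestepping the question of whether the $k$-property and pseudocompactness are inverse invariants of the perfect open map $\pi$, which is precisely what the paper delegates to \cite{A}; the paper's route, conversely, makes no use of Proposition \ref{quotient} at all. One small cleanup: your invocation of Theorem 2.4 in \cite{A} in the middle step does no work in your argument --- all you need there is that $G/H$ is regular, which holds for the coset space of any closed subgroup, and Proposition \ref{fk} does the rest.
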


The next fact is an immediate consequence of Proposition 6.3-(b) in \cite{CR} and Proposition \ref{ksp}:

\begin{cor} Let a topological group G be a k-space and H be a closed, normal subgroup of G. If H and $G/H$ are pseudocompact spaces, then G is weakly compact-bounded.
\end{cor}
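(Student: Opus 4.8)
The plan is to factor the argument entirely through pseudocompactness. Since $G$ is a topological group and a $k$-space, Proposition~\ref{ksp} tells us that for $G$ the property of being weakly compact-bounded and the property of being pseudocompact coincide. Hence the whole corollary reduces to a single task: showing that $G$ itself is pseudocompact. Once that is established, the $k$-space hypothesis lets us read off weak compact-boundedness immediately from the ``if'' direction of Proposition~\ref{ksp}.

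To obtain pseudocompactness of $G$, I would invoke the three-space behaviour of pseudocompactness in topological groups. By hypothesis $H$ is a closed normal subgroup of $G$ with $H$ pseudocompact, and the quotient group $G/H$ is pseudocompact as well; note that the fibres of the quotient homomorphism $\pi\colon G\to G/H$ are the cosets of $H$, each homeomorphic to $H$ and thus pseudocompact. This is precisely the configuration covered by Proposition~6.3(b) in \cite{CR}, which asserts that when a closed normal subgroup $H$ and the quotient $G/H$ are both pseudocompact, then $G$ is pseudocompact. Applying it yields that $G$ is pseudocompact.

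Combining the two steps closes the argument: $G$ is a pseudocompact topological group $k$-space, so Proposition~\ref{ksp} gives that $G$ is weakly compact-bounded.

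The point deserving attention is the division of labour between the hypotheses. The $k$-space assumption plays no role in deriving pseudocompactness of $G$; it is needed solely for the final translation from pseudocompactness to weak compact-boundedness through Proposition~\ref{ksp}. I therefore expect the genuine content of the corollary to be carried entirely by the cited three-space theorem, so that, exactly as the text advertises, the statement becomes immediate once these two ingredients are correctly matched to the hypotheses.
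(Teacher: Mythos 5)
Your proof is correct and follows exactly the route the paper intends: the paper states this corollary as an immediate consequence of Proposition 6.3(b) in \cite{CR} (pseudocompactness is a three-space property for topological groups) together with Proposition~\ref{ksp}, which is precisely your two-step argument. Your remark on the division of labour between the hypotheses is accurate as well.
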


If $G$ is a topological group,  $H$ is a locally compact subgroup of $G$, and $\pi:G \to G/H$ is the natural quotient mapping, by Corollary 2.1 in \cite{A},  there is an open neighbourhood $U$ of the neutral element $e$ in $G$ such that the restriction $\pi | \overline{U}:  \overline{U}  \to \pi(\overline{U})$ is a perfect mapping.  Moreover,  $\pi(\overline{U})=\overline{\pi(U)}$, so $\pi(\overline{U})$ is a regular closed set in $G/H$.  

Assuming that the subgroup $H$ of a topological group $G$ is locally compact, some local properties of  compactness type on $G/H$ can be extended to $G$ (see Corollary 3.2.6 in \cite{AT}) using the fact that these properties are invariant under perfect mappings.  In the case of locally (weakly) compact-bounded spaces it is also possible as shown by the following fact:

\begin{prop}\label{loc} Let G be a topological group and H be a locally compact subgroup of G such that the quotient space $G/H$ has one of the following properties:
\begin{enumerate}
\item $G/H$ is weakly compact-bounded;

\item $G/H$ is compact-bounded.
\end{enumerate}
Then, G also has the same property, that is, in case (1) G is locally weakly compact-bounded, in case (2) G is locally compact-bounded.
\end{prop}
\begin{proof} We will show (2) (the proof of (1)  is similar).  As mentioned earlier, there is an open neighbourhood $U$ of the neutral element $e$ such that the restriction $\pi | \overline{U}:  \overline{U}  \to \pi(\overline{U})$ is a perfect mapping.  

Let $\{U_n :n\in \omega\}$ be a family of non-empty open sets in $ \overline{U}$. Then, $\{\pi (U_n) :n\in \omega\}$ is a family of non-empty open sets in $\pi(\overline{U})$, thus there is a compact $E$ in $\pi(\overline{U})$ such that $\pi(U_n) \cap E \neq \emptyset $ for each $n\in \omega$.  Using an argument analogous to that in the proof of Proposition \ref{quotient}, we can find a compact subset $F$ of $\overline{U}$ such that $F\cap U_n \neq \emptyset $ for each $n\in \omega$.  Thus,  $\overline{U}$ is a  compact-bounded neighbourhood of $e$.
\end{proof}




A topological property $\mathcal{P}$ is said to be a \textit{three space property} if for every topological group $G$ and a closed invariant subgroup $H$ of $G$, the fact that both groups $H$ and $G/H$ have $\mathcal{P}$ implies that $G$ also has $\mathcal{P}$.  

In the study of  weakly compact-bounded spaces and their properties related to quotients, the following question immediately arises:

\begin{ques}\label{q1} Is the (weak) compact-boundedness a three space property?
\end{ques}

To try to give a partial answer to the Question \ref{q1}, we need the following concept: Let $G$ be a topological group, let $H$ be a closed subgroup of $G$ and   $\pi:G \to G/H$ be the natural quotient mapping.  We say that $G/H$ is a \textit{$P_C$-space} if for every compact subset $C$ of  $G/H$, the inverse image $\pi^{-1}(C)$ is a compact subset of $X$.

\begin{prop} Let $G$ be a topological group and  $H$ a closed subgroup of $G$ such that  $G/H$ is a $P_C$-space. Suppose that $H$ and $G/H$ satisfy one of the following: 
\begin{enumerate}
\item $H$ and $G/H$ are weakly compact-bounded;

\item $H$ and $G/H$ are compact-bounded.
\end{enumerate}
Then, $G$ has this property.
\end{prop}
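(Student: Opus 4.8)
The plan is to treat both cases at once, exploiting that the two characterizations recorded in Proposition \ref{subCB} have the same shape, and to reduce the whole statement to a single lemma about saturations of compact sets. Let $\pi\colon G\to G/H$ be the canonical quotient homomorphism, which is open, continuous and onto. Given a countable family $\{U_n:n\in\omega\}$ of non-empty open subsets of $G$, the images $\pi(U_n)$ are non-empty open subsets of $G/H$, so by the (weak) compact-boundedness of $G/H$ there is a compact set $E\subseteq G/H$ meeting each (respectively, infinitely many) of the $\pi(U_n)$. Choosing a preimage $x_n\in U_n$ with $\pi(x_n)\in E$ whenever $\pi(U_n)\cap E\neq\emptyset$ shows that the closed saturated set $A:=\pi^{-1}(E)$ meets each (respectively, infinitely many) of the $U_n$. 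By Proposition \ref{subCB} it therefore suffices to prove the following: \emph{if $E\subseteq G/H$ is compact, then $A=\pi^{-1}(E)$ is (weakly) compact-bounded}. All of the difficulty is concentrated in this lemma.

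The first route I would attempt for the lemma is a compact-covering argument. If one can produce a compact set $C\subseteq G$ with $\pi(C)=E$, then $A=\pi^{-1}(E)=CH$, and since $C$ is compact and $H$ is (weakly) compact-bounded, the product $C\times H$ is (weakly) compact-bounded by Theorem 4.4 in \cite{AMW}. As $A=CH$ is the continuous image of $C\times H$ under the multiplication map, Proposition \ref{imgreg}(a) would immediately give that $A$ is (weakly) compact-bounded, completing the proof. As a parallel, softer route aimed only at the weak conclusion, I would note that $G/H$, being a weakly compact-bounded $k$-space, is pseudocompact by Corollary \ref{ksp}, while $H$ is pseudocompact as a feebly compact topological group; the three-space theorem for pseudocompactness of topological groups (see \cite{AT}) then yields that $G$ is pseudocompact, and Proposition \ref{fk} together with the $k$-space property would finish the weak case. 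The feeble compactness of $A$ itself should be routine, since $\pi|_A\colon A\to E$ is an open continuous surjection onto the compact space $E$ whose fibres are homeomorphic to the feebly compact group $H$, and feeble compactness transfers from fibres over a compact base to the total space.

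The step I expect to be the genuine obstacle is precisely the lifting, equivalently the verification that $A=\pi^{-1}(E)$ (and hence $G$) is a $k$-space. In an arbitrary topological group a compact subset of $G/H$ need not lift to a compact subset of $G$ — this can fail without some local-compactness-type input — so exhibiting the compact set $C$ with $\pi(C)=E$ is exactly the point at which the hypothesis that $G/H$ is a $k$-space must be spent. Since $H$ is not assumed to be a $k$-space and compact-boundedness alone does not force that property, the $k$-space character of the total space $A$ does not come for free from the fibres, and the clean equivalence of Proposition \ref{fk} cannot be invoked until this is settled. Accordingly, I would concentrate the effort either on establishing the compact-covering lemma (building $C$ by using the openness of $\pi$ together with the $k$-space structure of $E\subseteq G/H$) or, equivalently, on deducing the $k$-space property of $\pi^{-1}(E)$ from that of $G/H$; once this is in hand, the reduction through Proposition \ref{subCB} and Proposition \ref{imgreg}(a) delivers both cases of the statement uniformly.
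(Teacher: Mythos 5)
Your reduction is sound as far as it goes, and its skeleton coincides with the paper's: push the countable family forward to $G/H$, extract a compact set $E$ meeting the relevant images, and then combine something compact lying over $E$ with a compact subset of $H$ supplied by the (weak) compact-boundedness of the fibres. But the argument is not complete: everything is deferred to the compact-covering lemma you isolate, namely the existence of a compact $C\subseteq G$ with $\pi(C)=E$, and you give no proof of it. This is not a routine verification. An open continuous surjection need not be compact-covering, and for quotient homomorphisms of topological groups the liftability of compact sets is precisely the kind of statement that normally requires extra input (compactness or local compactness of $H$, \v{C}ech-completeness or metrizability of $G$, etc.); the $k$-space hypothesis on $G/H$ does not obviously supply it. Since you explicitly concede that "all of the difficulty is concentrated in this lemma," what you have is a correct reduction together with an unproved core, i.e.\ a genuine gap. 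Your fallback route for the weak case fails for exactly the reason you state yourself: Proposition \ref{fk} requires $G$ (not $G/H$) to be a $k$-space, and nothing in the hypotheses gives that.

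You should also be aware that you cannot close the gap by importing the paper's own treatment of this step. The paper works with the full preimage $A=\pi^{-1}(E)$ and asserts that it is compact, citing the $k$-space property of $G/H$ and Theorem 3.7.18 of \cite{E}. But $\pi^{-1}(E)$ contains a closed homeomorphic copy of $H$ (any coset $gH$ with $gH\in E$), so it can be compact only when $H$ itself is compact, which is not assumed; and the cited theorem runs in the opposite direction (compactness of preimages of compacta, together with the range being a $k$-space, yields perfectness of the map). So the step you identify as the obstacle is the same step at which the paper's argument is defective. If the compact lift $C$ can be produced under the stated hypotheses, then either your route (via $C\times H$, Theorem 4.4 of \cite{AMW} and Proposition \ref{imgreg}(a)) or the paper's route with $C$ in place of $\pi^{-1}(E)$ (choosing $a_n\in C$ with $\pi(a_n)=\pi(x_n)$, translating to the family $\{H\cap a_n^{-1}U_n\}$, and ending with the compact set $CK$) finishes the proof; until that lemma is established, the proposition is not proved.
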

\begin{proof} We will  show (2) (the proof of (1)  is similar).  Let $\{U_n :n\in \omega\}$ be a family of non-empty open sets in $G$ and let $\pi:G \to G/H$ be the natural quotient mapping. Then,  $\{\pi (U_n) :n\in \omega\}$ is a family of non-empty open sets in $G/H$, thus there is a compact set $E$ in $G/H$ such that $\pi(U_n) \cap E \neq \emptyset $ for each $n\in \omega$.  Since $G/H$ is a $P_C$-space, the set $A=\pi^{-1}(E)$ is a compact subset of $G$. Clearly,  $ \pi(U_n)\cap \pi(A)\neq \emptyset$,  then there is $x_n\in U_n$ and $a_n\in A$ such that $\pi(x_n)=\pi(a_n)$ for each $n\in \omega$. Thus $x_n \in a_n H$, and consequently $a_n H \cap U_n \neq \emptyset$ for each $n\in \omega$. 

Now, we note that the set  $a_n H \cap U_n$ is homeomorphic to $ H \cap a^{-1}_n U_n$ for each $n\in \omega$.  Since $\{H\cap a^{-1}_n U_n :n\in \omega\}$ is a family of non-empty open sets in $H$, there is a compact subset $K$ of $H$ such that $K\cap (H\cap a^{-1}_n U_n)= K \cap a^{-1}_n U_n \neq \emptyset$ for each $n\in \omega$.  Thus, we have that $a_n K\cap  U_n\neq \emptyset$ for each $n\in \omega$, that is $A K\cap  U_n\neq \emptyset$ for each $n\in \omega$.  Since the set $AK$ is a compact subset of $G$ (Proposition 1.4.31 in \cite{AT}), we conclude that  $G$ is compact-bounded.
\end{proof}

Let $X$ and $Y$ be topological spaces. A topological property $\mathcal{P}$ will be called an \textit{inverse fiber property} if whenever $f:X\to Y$ is a continuous onto mapping such that the space $Y$ and the fibers of $f$ have $\mathcal{P}$, then $X$ also has $\mathcal{P}$.  By Proposition 2.2 in \cite{BT2}, we know that every inverse fiber property is a three space property.

\begin{prop} To have all (weakly) compact-bounded  subsets finite is a three space property.
\end{prop}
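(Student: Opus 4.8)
The plan is to prove the ostensibly stronger statement that $\mathcal{P}$, the property ``every (weakly) compact-bounded subset is finite'', is an \emph{inverse fiber property}; the theorem then follows immediately from Proposition 2.2 in \cite{BT2}, quoted just above, which upgrades any inverse fiber property to a three space property. So I would let $f:X\to Y$ be a continuous surjection for which $Y$ and every fiber $f^{-1}(y)$ enjoy $\mathcal{P}$, fix an arbitrary (weakly) compact-bounded subset $A\subseteq X$, and show that $A$ is finite.

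First I would pass to the image. By Proposition \ref{imgreg}-(a) the continuous image $f(A)$ is (weakly) compact-bounded, and since $f(A)\subseteq Y$ and $Y$ has $\mathcal{P}$, the set $f(A)$ must be finite. Being a finite Hausdorff space, $f(A)$ is discrete, so every subset of $f(A)$ is clopen there. Restricting and corestricting $f$ gives a continuous surjection $f|_A:A\to f(A)$, and hence each piece $A_y=A\cap f^{-1}(y)=(f|_A)^{-1}(\{y\})$, for $y\in f(A)$, is clopen in $A$.

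The crucial step is to show each $A_y$ is finite, and here lies the main obstacle: one cannot simply invoke heredity to closed subspaces, because (weak) compact-boundedness is \emph{not} closed-hereditary (the Mr\'owka space discussed at the start of Section 4 is the standard witness). I would circumvent this by exploiting the clopenness just obtained. Since $A_y$ is both open and closed in $A$, it equals $cl_A(\mathrm{int}_A(A_y))$, i.e. $A_y$ is a regular closed subset of $A$. Proposition \ref{imgreg}-(b) then guarantees that each $A_y$ is itself (weakly) compact-bounded. As $A_y\subseteq f^{-1}(y)$ and the fiber $f^{-1}(y)$ has $\mathcal{P}$, each $A_y$ is therefore finite.

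Finally, because $f(A)$ is finite, $A=\bigcup_{y\in f(A)}A_y$ is a finite union of finite sets and so is finite, which is exactly what was needed. This establishes that $\mathcal{P}$ is an inverse fiber property; combined with Proposition 2.2 in \cite{BT2} (and the fact that the fibers of the quotient map $\pi:G\to G/H$ are the cosets $gH$, each homeomorphic to $H$, so that they inherit $\mathcal{P}$ whenever $H$ does), we conclude that $\mathcal{P}$ is a three space property.
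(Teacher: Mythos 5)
Your proposal is correct and follows essentially the same route as the paper: both reduce the claim to showing that the property is an inverse fiber property, observe that $f(A)$ is finite, partition $A$ into finitely many clopen (hence regular closed) pieces $A\cap f^{-1}(y)$, and apply Proposition \ref{imgreg}(b) to conclude each piece is (weakly) compact-bounded and therefore finite. Your write-up is somewhat more explicit than the paper's about why clopenness yields regular closedness and why heredity to closed sets cannot be used directly, but the argument is the same.
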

\begin{proof} Let $\mathcal{P}_{cb}$ be the property that all compact-bounded subsets of a space $X$ are finite. It will be sufficient to prove that $\mathcal{P}_{cb}$ is an inverse fiber property.

Let $f:X \to Y$ be a continuous onto map. Suppose that $Y$ and the fibers of $f$ have $\mathcal{P}_{cb}$. Just as in the proof of Proposition 2.4(a) in \cite{BT2}, let $A\subset X$ be a compact-bounded subset of $X$. Then $f(A)\subset Y$ is compact-bounded subset of $Y$ and thus $f(A)$ is finite. Thus, the family $\{A \cap f^{-1}(y):y\in f(A) \}$ is a finite partition of $A$ into disjoint clopen subsets of $A$. Then, each set $A \cap f^{-1}(y)$ is compact-bounded by (2) of Theorem \ref{imgreg}; thus $A\cap f^{-1}(y)$ is finite. Therefore, $A$ is finite. 
\end{proof}

As usual, for a topological space $X$, $A(X)$ is the Abelian free topological group generated by $X$, and for a locally compact topological space $X$, $\alpha X$ is the one point compactification of $X$. In contrast to the previous result, we have the following example:

\begin{exam}[Example 4.6 in \cite{BT2}] ``To have all (weakly) compact-bounded  subsets  compact" is not a three space property.
\end{exam}
\begin{proof} Let $X=\Psi(\mathcal{D})$ be the space described in  Example \ref{Mr}. Then $X$ is weakly compact-bounded but it is not compact.  Let $f:\alpha X \to \omega +1$ be the mapping defined by $f(n)=n$ for each $n\in \omega$ and $f(x)=\omega$ for each $x\in \alpha X\setminus \omega$. Since $f$ is closed, it extends to a continuous open homomorphism $\bar{f} :A(\alpha X) \to A(\omega +1)$. Let $G$ be the subgroup of $A(\alpha X)$ generated by $X$, $N=\mbox{ker} \bar{f}$ and $K=G \cap N$. By Example 4.6 in \cite{BT2},  we know that   $K$ is a closed subgroup of $G$ such that all pseudocompact subspaces of the groups $K$ and $G/K$ are compact (thus $K$ and $G/K$  have the required property) and that $X$ is a closed weakly compact-bounded, non-compact subset of $G$.
\end{proof}

\begin{ques} Consider the following properties:
\begin{enumerate}

\item All weakly compact-bounded (compact-bounded) subsets are closed;

\item All weakly compact-bounded (compact-bounded) subsets are first-countable;

\item All weakly compact-bounded (compact-bounded) subsets are metrizable.
\end{enumerate}
Is any of these properties a three space property?
\end{ques}

\section{Free topological groups}

Let $F(X)$ be the free topological group and $A(X)$ be the free abelian topological group of a Tychonoff space $X$. We denote the groups $F(X)$ or $A(X)$ by $G(X)$. Also, $\beta X$ denotes the Stone-\v{C}ech compactification of $X$.

Recall that a topological space $X$ is \textit{$\omega$-bounded}  if the closure of every countable subset of $X$ is compact.  Clearly, every $\omega$-bounded space is a compact-bounded space. 

\begin{exam} There is a  compact-bounded space that is not $\omega$-bounded.
\end{exam}

\begin{proof} The zero-dimensional non-$\omega$-bounded space described in Example 2.9 in \cite{JSS} has the property that every countable discrete subset of $X$ has compact closure in $X$. We will show that $X$ is compact-bounded. Let $\{ U_n :n \in \omega\}$ be a family of non-empty open sets in $X$. 
As we have already mention, we can assume that $\{ U_n :n \in \omega\}$ is a family of mutually disjoint sets. Now we take $x_n \in U_n$ for each $n < \omega$. So, the collection $\{x_n : n < \omega\}$ is countable and discrete. 
Thus, the set ${\rm cl}_X(\{ x_n:n\in \omega\})$ is a compact subset  of $X$ which intersects each element of $\{ U_n :n \in \omega\}$.  Therefore, $X$ is a compact-bounded space.
\end{proof} 

By Proposition 4.6 in \cite{LT},  the following conditions are equivalent for an $\omega$-bounded space $X$:

(1) The Stone-\v{C}ech compactification $\beta X$ is scattered;

(2) $X$ is scattered.

Clearly, (1) always implies (2).  The following fact shows that compact-boundedness also makes the above conditions equivalent.

\begin{prop} If X is a scattered compact-bounded Tychonoff space,  then $\beta X$ is scattered.
\end{prop}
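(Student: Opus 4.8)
The plan is to establish the nontrivial content of the statement, namely that the scatteredness of $X$ together with compact-boundedness forces $\beta X$ to be scattered (the converse, that $\beta X$ scattered makes the dense subspace $X$ scattered, is automatic). I would argue by contradiction, leaning on the classical characterization that a compact Hausdorff space is scattered if and only if it admits no continuous surjection onto the unit interval $[0,1]$.

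So suppose $\beta X$ is \emph{not} scattered. By that characterization there is a continuous surjection $\phi:\beta X\to[0,1]$; concretely one extracts a non-empty perfect subset of $\beta X$, maps it onto the Cantor set and then onto $[0,1]$, and spreads the map over $\beta X$ by the Tietze extension theorem. First I would use that $X$ is dense in $\beta X$ to conclude that $\phi(X)$ is dense in $[0,1]$. Next, since $X$ is scattered, its set $D$ of isolated points is dense in $X$; applying $\phi$ gives that $\phi(D)$ is dense in $[0,1]$.

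The crux of the argument is to localize to a countable set, which is exactly where compact-boundedness enters. Because $[0,1]$ is second countable, I can select a countable subset $E\subseteq D$ with $\phi(E)$ still dense in $[0,1]$. Now $E$ is a countable set of isolated points of $X$, so Proposition \ref{isolated} applies and yields that $K:=\mathrm{cl}_X(E)$ is compact. As a subspace of the scattered space $X$, the set $K$ is itself scattered, and it is compact. On the other hand $\phi(K)$ is a compact, hence closed, subset of $[0,1]$ containing the dense set $\phi(E)$, so $\phi(K)=[0,1]$. Thus the compact scattered space $K$ maps continuously onto $[0,1]$, contradicting the characterization above; this contradiction shows $\beta X$ must be scattered.

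I expect the main obstacle to be not the measure-theoretic or combinatorial heart of the proof, which reduces cleanly to the countable-to-compact passage via Proposition \ref{isolated}, but rather the careful use of the two classical facts about compacta: producing the surjection $\phi:\beta X\to[0,1]$ from a non-scattered compactum, and the impossibility of mapping a compact scattered space onto $[0,1]$. Both are standard (the latter follows from the fact that a continuous image of a compact scattered space is scattered), so I would either cite them or sketch the perfect-subset/Cantor-scheme construction, and then the rest of the argument is routine.
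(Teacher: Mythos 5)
Your proof is correct and follows essentially the same strategy as the paper's: argue by contradiction via a continuous surjection $\phi:\beta X\to[0,1]$, use compact-boundedness to produce a compact subset $K\subseteq X$ whose image is all of $[0,1]$, and conclude from the fact that a compact scattered space cannot map continuously onto $[0,1]$. The only (minor) difference is how $K$ is obtained: the paper applies the definition of compact-boundedness directly to the traces on $X$ of the preimages of a countable base of $[0,1]$, whereas you route through the density of the isolated points of the scattered space $X$ and Proposition \ref{isolated}; both steps are valid.
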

\begin{proof} Suppose that $\beta X$ is not scattered. Then, there is a continuous surjection $f:\beta X \to I$, where $I$ denotes the closed unit interval (Theorem 8.5.4 in \cite{S}).    Let $\{V_n:n\in \omega\}$ be a countable base of $I$. Since  $\{f^{-1}(V_n)\cap X:n\in \omega\}$ is a family of non-empty open sets in $X$, there is a compact subset $K$ of $X$ such that $K\cap (f^{-1}(V_n)\cap X)=K\cap f^{-1}(V_n) \neq \emptyset$ for each $n\in \omega$. Thus, $
\emptyset\neq f(K\cap f^{-1}(V_n))\subset f(K)\cap f( f^{-1}(V_n))\subset f(K)\cap V_n, $
for each $n\in \omega$. This implies that the set $f(K)$ is dense in $I$. Since $K$ is compact and scattered, the image $f(K)=cl_I(f(K))=I$ is a countable set (Corollary 8.5.6 in \cite{S}), a contradiction.
\end{proof}

 With the previous fact we can extend the Theorem 4.7 in \cite{LT} to compact-bounded spaces.

\begin{prop} Let X be a compact-bounded space. Then the following conditions are equivalent:
\begin{enumerate}
\item The group $G(X)$ admits an open continuous homomorphism onto the circle group $\mathbb{T}$;

\item The group $G(X)$ admits a non-trivial metrizable quotient;

\item The group $G(X)$ admits a non-trivial separable metrizable quotient;

\item X is not scattered.
\end{enumerate}
\end{prop}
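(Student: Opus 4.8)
The plan is to establish the cycle $(1)\Rightarrow(3)\Rightarrow(2)\Rightarrow(4)\Rightarrow(1)$, following the proof of Theorem 4.7 in \cite{LT} for $\omega$-bounded spaces and replacing its single use of $\omega$-boundedness by the preceding Proposition. Two of these implications are purely formal. For $(1)\Rightarrow(3)$, if $f\colon G(X)\to\mathbb{T}$ is an open continuous surjective homomorphism, then $N=\ker f$ is a closed subgroup and the map induced on $G(X)/N$ is a topological isomorphism onto $\mathbb{T}$; since $\mathbb{T}$ is a non-trivial separable metrizable group, $(3)$ holds. The implication $(3)\Rightarrow(2)$ is immediate, a separable metrizable quotient being in particular a non-trivial metrizable quotient.

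For $(4)\Rightarrow(1)$ I would argue without using compact-boundedness. Scatteredness is hereditary and $X$ is homeomorphic to a dense subspace of $\beta X$; hence if $\beta X$ were scattered then so would be $X$, and therefore $X$ not scattered forces $\beta X$ not scattered. As in the proof of the preceding Proposition, Theorem 8.5.4 in \cite{S} then yields a continuous surjection of $\beta X$ onto $I=[0,1]$. From this point the construction of an open continuous homomorphism of $G(X)$ onto $\mathbb{T}$ is exactly the hypothesis-free argument used in \cite{LT} for $(4)\Rightarrow(1)$, so I would simply invoke it. The delicate point here is not the existence of a continuous $\mathbb{T}$-valued map but the \emph{openness} and \emph{surjectivity} of the resulting homomorphism: restricting the composite $\beta X\to I\to\mathbb{T}$ to the dense subset $X$ only produces a map with dense image, and the passage to an honest open surjection onto $\mathbb{T}$ is precisely the content of the cited machinery for free (abelian) topological groups.

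The heart of the matter, and the only place where the hypothesis on $X$ is needed, is $(2)\Rightarrow(4)$, which I would prove in contrapositive form. Assume $X$ is scattered. Being compact-bounded and scattered, $X$ satisfies the hypotheses of the preceding Proposition, so $\beta X$ is scattered; this is exactly the substitute for the $\omega$-bounded step of \cite{LT}. With $\beta X$ scattered, every continuous map of $X$ into a compact metrizable space extends over $\beta X$ and thus has scattered, hence countable, image; as in \cite{LT} this rules out a continuous surjection of $X$ onto a non-scattered compact metrizable space and so forces every metrizable quotient of $G(X)$ to be trivial, i.e. $\neg(2)$. I expect this implication to be the main obstacle, since it is where the new proposition must be combined with the structure theory of quotients of free topological groups; once scatteredness of $\beta X$ is available, the remaining argument is word for word that of \cite{LT}.
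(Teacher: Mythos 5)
Your proposal is correct and takes essentially the same route as the paper: both arguments reduce everything to Theorem 4.5 of \cite{LT} (which, for the pseudocompact space $X$, makes (1)--(3) equivalent to ``$\beta X$ is not scattered'') and then use the preceding proposition, together with the heredity of scatteredness, to identify ``$\beta X$ not scattered'' with ``$X$ not scattered''. The paper's proof is simply a terser packaging of the cycle you spell out, with the only use of compact-boundedness occurring in exactly the same place (your $(2)\Rightarrow(4)$ step).
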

\begin{proof} Since $X$ is pseudocompact,  Theorem 4.5 in \cite{LT}, implies the equivalences  (1)$\Leftrightarrow$(2)$\Leftrightarrow$(3).  We note that (d) of Theorem 4.5 in \cite{LT} is equivalent to item (4), thus we can conclude this fact.
\end{proof}

\section{R-weakly compact-bounded  subsets}
In this section we consider only Tychonoff spaces.  

\begin{defn}\label{r-com} Let $A$  be a subset of a topological space $X$.
\begin{enumerate}

\item $A$ is  \textit{r-pseudocompact} in $X$ if every infinite family of open sets in $X$ meeting $A$ has an accumulation point in $A$.

\smallskip

\item $A$ is  \textit{strongly  r-pseudocompact} in $X$ if the product $A\times B$ is $r$-pseudocompact in $X\times Y$ whenever $B$ is a $r$-pseudocompact subset of a space $Y$. 

\smallskip

\item $A$ is \textit{r-weakly compact-bounded} in $X$ if for every infinite family $\mathcal{U}$ of open sets in $X$ meeting $A$,  there is some compact subset $K_A\subset A$ which intersects infinitely many elements of $\mathcal{U}$. 

\end{enumerate}
\end{defn}

A detailed study of properties (1) and (2) can be found in \cite{HST},  \cite{ST} and \cite{XY}.





\begin{prop}[Theorem 2.5 in \cite{HST}]\label{timespseudo} Let $A$ be a subset of a space $X$. Then, $A$ is  \textit{strongly  r-pseudocompact} in $X$ if and only if the product $A\times Y$ is $r$-pseudocompact in $X\times Y$ whenever $Y$ is a pseudocompact space $Y$. 
\end{prop}

\begin{prop}\label{wcbsrp}  Every r-weakly compact-bounded  subset of  a space X is strongly r-pseudocompact of X.
\end{prop}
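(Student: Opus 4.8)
The plan is to reduce the statement to the purely combinatorial characterization of strong $r$-pseudocompactness supplied by Proposition \ref{rpseudo}, and then to observe that the hypothesis of $r$-weak compact-boundedness is formally stronger than the condition appearing there. By the equivalence (1)$\Leftrightarrow$(2) of Proposition \ref{rpseudo}, a subset $A\subseteq X$ is strongly $r$-pseudocompact in $X$ precisely when every infinite countable family of \emph{mutually disjoint} open sets of $X$ meeting $A$ admits a compact set $K_A\subseteq A$ intersecting infinitely many of its members. Hence it suffices to verify this disjoint-family condition for an $r$-weakly compact-bounded $A$.

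First I would fix an arbitrary infinite countable family $\mathcal{U}=\{U_n:n\in\omega\}$ of mutually disjoint non-empty open subsets of $X$, each meeting $A$. Such a family is, in particular, an infinite countable family of open sets of $X$ meeting $A$, so the defining property of $r$-weak compact-boundedness applies to it verbatim: there is a compact set $K_A\subseteq A$ intersecting infinitely many members of $\mathcal{U}$. This is exactly condition (2) of Proposition \ref{rpseudo}, and therefore $A$ is strongly $r$-pseudocompact in $X$.

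In this route essentially all of the content is packaged into Proposition \ref{rpseudo}; granting that, the present implication is merely a specialization of the universal quantifier from all families to disjoint families, so there is no genuine obstacle to overcome. The only point requiring care is that $r$-weak compact-boundedness already delivers the witnessing compact set \emph{inside} $A$, which is precisely what condition (2) demands. If one instead preferred to argue through the product characterization (1)$\Leftrightarrow$(3) of Proposition \ref{rpseudo}, proving directly that $A\times Y$ is $r$-pseudocompact in $X\times Y$ for every feebly compact $Y$, then the real work would lie in controlling the products of the compact witness $K_A$ with the $Y$-coordinate; there Lemma \ref{fpc}, which guarantees feeble compactness of the product of a feebly compact space with a compact one, would be the essential tool. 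That alternative is valid but strictly longer than the specialization above, so I would present the short argument via condition (2).
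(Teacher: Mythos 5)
Your argument is logically valid given Proposition \ref{rpseudo}, but it is not the route the paper takes, and the difference matters. You invoke the equivalence (1)$\Leftrightarrow$(2) of Proposition \ref{rpseudo} and observe that $r$-weak compact-boundedness, being quantified over \emph{all} countable families of open sets meeting $A$, trivially implies the disjoint-family condition (2); the entire content of the proposition is thereby delegated to the implication (2)$\Rightarrow$(1). The paper instead uses the implication (3)$\Rightarrow$(1): it fixes a feebly compact $Y$, takes a countable family of boxes $U_n\times V_n$ meeting $A\times Y$, extracts a compact $K_A\subseteq A$ meeting infinitely many $U_n$ from the $r$-weak compact-boundedness of $A$, and then applies Lemma \ref{fpc} to get an accumulation point of the subfamily $\{(U_m\cap K_A)\times V_m\}$ inside $K_A\times Y$ --- exactly the alternative you sketch and set aside in your closing paragraph. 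The trade-off is this: the paper's proof of \ref{rpseudo} establishes (3)$\Rightarrow$(1) explicitly (an $r$-pseudocompact $B$ is feebly compact, so (3) applies), whereas (2)$\Rightarrow$(1) is justified there only by the remark that ``a similar argument to the proof of Proposition \ref{disj}'' works, and Proposition \ref{disj} is itself quoted without proof from \cite{AOT}. So your shorter derivation rests its full weight on the least-documented piece of Proposition \ref{rpseudo}, while the paper's longer argument is self-contained modulo the easy direction and does the genuine work (the product argument via Lemma \ref{fpc}) on the spot. If you keep your version, you should at least flag that you are using (2)$\Rightarrow$(1) specifically, and ideally supply that implication; otherwise the ``longer'' product argument is the safer proof to present.
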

\begin{proof} Let $A$ be a weakly compact-bounded subset of $X$ and  let $Y$ be a pseudocompact space.  By Proposition \ref{timespseudo}, it will suffice to show that  $A\times Y$ is $r$-pseudocompact in $X \times Y$.

Let $\{U_n \times V_n:n\in \omega\}$ be a  family of non-empty open subsets of $X\times Y$ where $\{U_n :n\in \omega\}$ is a family of non-empty open subsets of $X$ and $\{V_n :n\in \omega\}$ is a family of non-empty open subsets of $Y$ such that $(U_n \times V_n) \cap (A\times Y)  \neq \emptyset$ for each $n\in \omega$.   Since $ U_n \cap A \neq \emptyset$  for each $n\in \omega$, there is a compact subset $K_A$ of $A$ and an infinite subset $I\subset \omega$ such that $U_m \cap K_A \neq \emptyset$ for each $m\in I$.  By Corollary 3.10.27 \cite{E}, the product $K_A \times Y$ is pseudocompact (and thus a $r$-pseudocompact subset of $X\times Y$). Since the family $\{ U_m \times V_m:m\in I \}$ meeting $K_A \times Y$, then $\{ U_m \times V_m:m\in I \}$ has an accumulation point in $K_A \times Y\subset A\times Y$, which is also an accumulation point of the family $\{U_n \times V_n:n\in \omega\}$. Therefore,   $A\times Y$ is $r$-pseudocompact in $X \times Y$.
\end{proof}

\begin{exam} There is a r-weakly compact-bounded subset of a  space X  that it is not pseudocompact.
\end{exam}
\begin{proof} Let $X=\Psi(\mathcal{D})$ be the space described in the Example \ref{Mr}.  Since every point in $\omega$ is isolated in $X$,  $\mathcal{D}$ is $r$-pseudocompact in $X$. Also $\mathcal{D}$ is closed in $X$, thus it is locally compact, that is, $\mathcal{D}$ is  $r$-weakly compact-bounded  in $X$ (Proposition \ref{rpse}).  However, $\mathcal{D}$ is a closed discrete subspace of $X$,  and hence $\mathcal{D}$ is not pseudocompact.
\end{proof}

\begin{exam} There is an $r$-pseudocompact subset  of a space X  that it is not a r-weakly compact-bounded in $X$.
\end{exam}
\begin{proof} The dense pseudocompact subspace $P$ of $\beta \omega$ described in the Example 2.4-(4) in \cite{HST} is $r$-pseudocompact but it is not  strongly $r$-pseudocompact in $\beta \omega$  (Example 2.4-(5) in \cite{HST}) thus,  it is not $r$-weakly compact-bounded (Proposition\ref{wcbsrp}).
\end{proof}

\begin{ques} Is there a strongly $r$-pseudocompact subset of a space $X$ that is not $r$-weakly compact-bounded?
\end{ques}

A similar argument of the proof of Proposition 2.4 in \cite{AMW} proofs the following  fact.

\begin{prop}\label{rpseu} Let $A$ be a subset of a space $X$. Then,  the following conditions are equivalent:
\begin{enumerate}
\item  A is r-weakly compact-bounded in X;

\item For every infinite family $\mathcal{U}$ of mutually disjoint  open sets in $X$ meeting $A$,  there is some compact subset $K_A\subset A$ which intersects infinitely many elements of $\mathcal{U}$. 

\end{enumerate}
\end{prop}

The following fact shows sufficient conditions under which a subset $r$-pseudocompact of a topological space $X$ is a $r$-weakly compact-bounded subset.

\begin{prop}\label{rpse} An r-pseudocompact subset A of a space X is r-weakly compact-bounded in X provided A is a k-space.
\end{prop}
\begin{proof} Let $A$ be a $r$-pseudocompact $k$-space subset of $X$ and let $\{U_n:n\in \omega\}$ be a  family of mutually disjoint non-empty open subsets of $X$ that meet $A$.  By  regularity of $X$, there is open subsets $V_n$ of $X$ such that $V_n\cap A \subset \overline{V_n}\cap A \subset U_n\cap A$ for each $n\in \omega$.  Since $A$ is $r$-pseudocompact, the family $\{V_n:n\in \omega\}$ has an accumulation point $q \in A$, which is an accumulation point of the set $\bigcup\{\overline{V_n}\cap A: n\in \omega\}$. Since $\{U_n\cap A:n\in \omega\}$ is  a  family of mutually disjoint sets in $A$, it follows that $q\notin \bigcup\{\overline{V_n}\cap A: n\in \omega\}$, thus the set $ \bigcup\{\overline{V_n}\cap A: n\in \omega\}$ is not closed in $A$.  Since $A$ is a $k$-space,  there is a compact $K_A$ of $A$ such that $K_A \cap  \bigcup\{\overline{V_n}\cap A: n\in \omega\}$ is not closed in $K_A$, that is, the set $\{n\in \omega: K_A\cap \overline{V_n} \neq \emptyset \}$ is infinite and $K_A$ meets infinitely many elements  of $\{\overline{V_n}: n \in \omega\}$ (and thus of $\{U_n:n\in \omega\}$).  By Proposition \ref{rpseu}, we conclude that $A$ is an $r$-weakly compact-bounded subset of  $X$.
\end{proof}

By Proposition \ref{wcbsrp} and the previous proposition we have the next fact.

\begin{cor}\label{eqwr} Let X be a topological space. Then the following conditions are equivalent for a k-space subset A of X:
\begin{enumerate}
\item A is $r$-weakly compact-bounded in X;

\item A is strongly r-pseudocompact;

\item A is r-pseudocompact.
\end{enumerate}
\end{cor}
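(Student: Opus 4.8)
The plan is to close the cycle of implications $(1)\Rightarrow(2)\Rightarrow(3)\Rightarrow(1)$, with the two substantive directions supplied by the results immediately preceding this corollary.

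The implication $(1)\Rightarrow(2)$ is exactly Proposition \ref{wcbsrp}: every $r$-weakly compact-bounded subset of $X$ is strongly $r$-pseudocompact. I would emphasize that this direction needs neither the weak regularity of $X$ nor the $k$-space hypothesis on $A$, so it applies verbatim. For $(2)\Rightarrow(3)$ the idea is that strong $r$-pseudocompactness trivially specializes to $r$-pseudocompactness. Concretely, by the equivalence $(1)\Leftrightarrow(3)$ in Proposition \ref{rpseudo}, a strongly $r$-pseudocompact set $A$ satisfies that $A\times Y$ is $r$-pseudocompact in $X\times Y$ for every feebly compact space $Y$; taking $Y=\{\ast\}$ a one-point space (which is feebly compact) and using the canonical homeomorphisms $A\times\{\ast\}\cong A$ and $X\times\{\ast\}\cong X$, one reads off that $A$ is $r$-pseudocompact in $X$. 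This step is again free of any assumption on $X$ or $A$.

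The remaining implication $(3)\Rightarrow(1)$ is where the standing hypotheses are consumed: it is precisely Proposition \ref{rpse}, which guarantees that in a weakly regular space $X$ every $r$-pseudocompact $k$-space subset is $r$-weakly compact-bounded. Here both the weak regularity of $X$ and the requirement that $A$ be a $k$-space are essential, the latter being used to upgrade an accumulation point of a family of open sets into an honest compact subset of $A$ meeting infinitely many of its members.

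Since each arrow is either a direct citation or a one-line reduction, I do not anticipate a genuine obstacle; the only point deserving a moment's care is making the reduction $(2)\Rightarrow(3)$ literal, namely checking that the subspace $A\subseteq X$ is correctly identified with $A\times\{\ast\}\subseteq X\times\{\ast\}$ so that $r$-pseudocompactness transfers across that homeomorphism.
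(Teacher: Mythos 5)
Your proposal is correct and takes essentially the same route as the paper, whose one-line justification cites Proposition \ref{wcbsrp} for $(1)\Rightarrow(2)$ and Proposition \ref{rpse} for $(3)\Rightarrow(1)$, leaving $(2)\Rightarrow(3)$ as the immediate specialization you spell out via a one-point space. Your only addition is making that trivial reduction explicit, which is harmless and matches the intended argument.
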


\begin{prop} Let $A_i$ be a $r$-weakly compact-bounded subset of a space $X_i$, where $i\in \omega$. Then, the product $\prod_{i\in \omega} A_i$ is $r$-weakly compact-bounded in $\prod_{i\in \omega} X_i$.
\end{prop}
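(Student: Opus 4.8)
The plan is to reduce to basic open sets and then use the $\Delta$-system (sunflower) lemma to tame the finite supports, which is exactly where the difficulty lies. First I would replace each member of the given infinite countable family $\mathcal U=\{U_n:n\in\omega\}$ of open sets meeting $A:=\prod_{i\in\omega}A_i$ by a basic open set $U_n=\prod_{i\in\omega}U_{n,i}$ still meeting $A$, where $U_{n,i}=X_i$ off a finite \emph{support} $F_n=\{i:U_{n,i}\neq X_i\}$; shrinking the sets only makes the conclusion stronger, and a point of $U_n\cap A$ witnesses $U_{n,i}\cap A_i\neq\emptyset$ for every $i$. A compact set of product form $K=\prod_i K_i$ (compact by Tychonoff's theorem, with each nonempty $K_i\subseteq A_i$) meets $U_n$ exactly when $K_i\cap U_{n,i}\neq\emptyset$ for every $i\in F_n$, the remaining coordinates being automatically unconstrained. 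So the whole problem reduces to choosing the $K_i$ together with an infinite set of indices so that all of the (finitely many, per $n$) support-conditions hold simultaneously.

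The hard part is that the supports $F_n$ may grow without bound (e.g.\ $F_n=\{0,1,\dots,n\}$), so a naive coordinatewise-then-diagonal construction always leaves high-index coordinates of the chosen $U_{n_k}$ unhandled. To get around this I would apply the $\Delta$-system lemma to the countable family $\{F_n:n\in\omega\}$ of finite sets, obtaining an infinite $J\subseteq\omega$ and a finite \emph{root} $R$ with $F_n\cap F_{n'}=R$ for all distinct $n,n'\in J$. This splits the coordinates into two manageable types: the finitely many root coordinates $i\in R$ (which may lie in the support of every $U_n$, $n\in J$), and the petal coordinates $i\notin R$, each of which lies in $F_n$ for \emph{at most one} $n\in J$.

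For the root I would argue exactly as in the finite-product case: enumerating $R=\{i_1,\dots,i_r\}$ and using the $r$-weak compact-boundedness of $A_{i_1},\dots,A_{i_r}$ successively on the families $\{U_{n,i_s}:n\in J_{s-1}\}$, I obtain compact sets $K_{i_s}\subseteq A_{i_s}$ and a decreasing chain $J=J_0\supseteq J_1\supseteq\cdots\supseteq J_r$ of infinite index sets with $K_i\cap U_{n,i}\neq\emptyset$ for every $i\in R$ and every $n\in J_r$; finiteness of $R$ is what keeps $J_r$ infinite. For a petal or free coordinate $i\notin R$ there is at most one $n\in J_r$ with $i\in F_n$, so I may simply take $K_i$ to be a single point of $U_{n,i}\cap A_i$ for that $n$ (and any point of $A_i$ if there is none); disjointness of the petals makes this choice unambiguous. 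Setting $K=\prod_i K_i$, one checks coordinatewise that $K\cap U_n\neq\emptyset$ for every $n\in J_r$: on root coordinates by the nested construction, on the relevant petal coordinate because $K_i\subseteq U_{n,i}$, and elsewhere because $U_{n,i}=X_i$. Since $J_r$ is infinite and $K$ is a compact subset of $A$, this shows $A$ is $r$-weakly compact-bounded. (If one prefers, the initial reduction to mutually disjoint basic sets can be made via the argument of Proposition~\ref{disj}, but it is not actually needed here.)
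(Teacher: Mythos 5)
There is a genuine gap at the central step: the $\Delta$-system (sunflower) lemma does not apply to a countably infinite family of finite sets whose sizes are unbounded, and the example you yourself single out as the source of the difficulty, $F_n=\{0,1,\dots,n\}$, is the standard counterexample. Indeed, for any infinite $J\subseteq\omega$ and $n_1<n_2<n_3$ in $J$ one has $F_{n_1}\cap F_{n_2}=F_{n_1}$ while $F_{n_2}\cap F_{n_3}=F_{n_2}\neq F_{n_1}$, so no infinite subfamily is a $\Delta$-system. The classical lemma requires either an uncountable family or supports of uniformly bounded size (the latter case via Ramsey's theorem); neither hypothesis is available here, and in this example one cannot even pass to an infinite subfamily with bounded supports. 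Everything after that step (the finite iteration on the root, the single-point choices on the petals, Tychonoff's theorem) is fine, but it rests on a root/petal decomposition that need not exist.

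The paper circumvents exactly this obstacle by a diagonal argument rather than a combinatorial one: it applies $r$-weak compact-boundedness coordinate by coordinate to produce compacta $K_n\subseteq A_n$ together with a decreasing chain $J_0\supseteq J_1\supseteq\cdots$ of infinite index sets, takes an infinite pseudo-intersection $J$ with $J\setminus J_n$ finite for every $n$, and then \emph{repairs} each $K_n$ by adjoining a finite set $F_n\subseteq A_n$ of points witnessing $A_n\cap\pi_n[U_m]\neq\emptyset$ for the finitely many $m\in J\setminus J_n$; the product of the enlarged compacta $E_n=K_n\cup F_n$ then meets every $U_m$ with $m\in J$ (after the same harmless reduction to basic boxes that you perform, which the paper leaves implicit). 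Your root-coordinate iteration is essentially the first finitely many steps of this scheme; what is missing is the pseudo-intersection plus finite-repair device that replaces the unavailable $\Delta$-system. If you substitute that device for the sunflower step, your argument goes through.
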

\begin{proof} We denote by $A$ and $X$ the products $\prod_{i\in \omega} A_i$  and  $\prod_{i\in \omega} X_i$, respectively. Let $\mathcal{U}= \{U_m:m\in \omega \}$ be a family of  non-empty open subsets of $X$ that meets $A$. Then,  $\pi_0[\mathcal{U}]=\{\pi_0[U_m]:m\in \omega \}$ is a countable family of non-empty open sets in $X_0$ that meets $A_0$.  Thus,  there is a compact space $K_0\subset A_0$ which meets infinitely many elements of  $\pi_0[\mathcal{U}]$, say $\mathcal{U}_0=\{ U\in \mathcal{U}: K_0\cap \pi_0[U]\neq \emptyset\}$.  We can assume that $\pi_1[\mathcal{U}]$ is a countable family of non-empty open sets in $X_1$ that meets $A_1$, thus   there is a compact space $K_1\subset A_1$ which meets infinitely many elements of  $\pi_1[\mathcal{U}]$, say $\mathcal{U}_1=\{ U\in \mathcal{U}: K_1\cap \pi_1[U]\neq \emptyset\}$, iterating this argument, we obtain a decreasing sequence of infinite subsets $\{J_n:n\in \omega \}$ of $\omega$, a decreasing sequence $\{\mathcal{U}_n:n\in \omega \}$ of infinite subsets of $\mathcal{U}$ where $\mathcal{U}_n=\{ U_m:m\in J_n\}$ and compact subspaces $K_n\subset A_n$ such that $K_n$ meets each element of the family of non-empty open sets $\pi_n[\mathcal{U}]$. Let $J\subset \omega$ be an infinite set which is almost contained in each set $J_n$. Let $\mathcal{V}=\{ U_j \in \mathcal{U}:j\in J\}$.  For each $n\in \omega$ and each $m\in J\setminus J_n$ we can take a finite set $F_n\subset A_n$ which meets each $\mathcal{V}\setminus \mathcal{U}_n$ and then $E_n=K_n \cup F_n$ is a compact subspace which meets each element of the family $\pi_n[\mathcal{U}]$ for each $n\in J$ and so $\prod_{i\in \omega} E_i$ is the required compact subspace of $A$.
\end{proof}

\begin{cor} Let $A_i$ be a $r$-pseudocompact k-space subset of a  space $X_i$, where $i\in \omega$. Then, the product $\prod_{i\in \omega} A_i$ is $r$-weakly compact-bounded in $\prod_{i\in \omega} X_i$.
\end{cor}

A subset $A$ of a topological space $X$ is said to be \textit{C-compact} in $X$ if the image $f(A)$ is compact, for any continuous real-valued function $f$ on $X$.  

Now, the following implications are clear:
 
\smallskip		

\begin{center}
$r$-weakly compact-bounded $\Rightarrow$ strongly $r$-pseudocompact  $\Rightarrow$ $r$-pseudocompact  $\Rightarrow$ $C$-compact
\end{center}

\smallskip

The Example 2.4-(2) in \cite{HST} shows a example of a $C$-compact subset of a compact space not $r$-pseudocompact.

By Corollary 3.11 in \cite{HST}, we know that $C$-compactness and strong $r$-pseudocompactness coincide in the class of topological groups.   
Thus, a consequence of Corollary \ref{eqwr} is the following:

\begin{prop}\label{Cwcc} Let G be a  topological group and let $A$ be a k-space subset of G.  Then,  the following conditions are equivalent:
\begin{enumerate}
\item A is $r$-weakly compact-bounded in G;

\item A is strongly r-pseudocompact in G;

\item A is r-pseudocompact in G;

\item A is C-compact in G.

\end{enumerate}
\end{prop}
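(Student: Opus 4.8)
The plan is to reduce the whole statement to two results already in hand, namely Corollary~\ref{eqwr} and Corollary 3.11 in \cite{HST}, combined with the chain of implications displayed immediately before the proposition. That chain supplies (1)$\Rightarrow$(2)$\Rightarrow$(3)$\Rightarrow$(4) in any space whatsoever, so the entire content of the proposition amounts to closing the loop, i.e.\ getting back from (4) to (1). I would not attempt a direct (4)$\Rightarrow$(1) argument; instead I would assemble two blocks of equivalences and splice them together.

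First I would record the one structural fact that makes Corollary~\ref{eqwr} applicable: a Hausdorff topological group $G$ is Tychonoff, hence regular, and every regular space is weakly regular. Indeed, given a non-empty open $U$ and a point $x\in U$, regularity yields an open $V$ with $x\in V\subseteq \overline{V}\subseteq U$; then $V\subseteq \mathrm{int}(\overline{V})$ gives $\overline{\mathrm{int}(\overline{V})}=\overline{V}$, so $\overline{V}$ is a non-empty regular closed subset of $U$. Thus $G$ is weakly regular. With this in place, Corollary~\ref{eqwr} applies verbatim to the $k$-space subset $A\subseteq G$ and yields the first block of equivalences (1)$\Leftrightarrow$(2)$\Leftrightarrow$(3).

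It then remains only to fold condition (4) into this cycle. Here I would invoke Corollary 3.11 in \cite{HST}, which states that $C$-compactness and strong $r$-pseudocompactness coincide for subsets of topological groups; this is exactly the equivalence (4)$\Leftrightarrow$(2). Chaining the two blocks, (1)$\Leftrightarrow$(2)$\Leftrightarrow$(3) from Corollary~\ref{eqwr} and (2)$\Leftrightarrow$(4) from Corollary 3.11 in \cite{HST}, produces the full equivalence (1)$\Leftrightarrow$(2)$\Leftrightarrow$(3)$\Leftrightarrow$(4), as required.

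The argument is therefore essentially a matter of correctly combining prior results, and I do not expect a genuine obstacle. The single point that must not be glossed over is the role of the group hypothesis: being a topological group is what simultaneously guarantees weak regularity (so that Corollary~\ref{eqwr} is legitimate for the $k$-space subset $A$) and licenses the identification of $C$-compactness with strong $r$-pseudocompactness through Corollary 3.11 in \cite{HST}. Without the group structure neither ingredient is available, so the only real care needed is to make sure both hypotheses are invoked correctly rather than to overcome any computational difficulty.
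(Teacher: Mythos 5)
Your proposal is correct and follows essentially the same route as the paper, which likewise obtains the result by combining Corollary~\ref{eqwr} (applicable because a Hausdorff topological group is Tychonoff, hence weakly regular) with the equivalence of $C$-compactness and strong $r$-pseudocompactness in topological groups from Corollary 3.11 in \cite{HST}. Your explicit verification that regularity implies weak regularity is a detail the paper leaves implicit, but the argument is the same.
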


\begin{cor} Let $A_i$ be a $C$-compact k-space subset of a topological group $G_i$, where $i\in \omega$. Then, the product $\prod_{i\in \omega} A_i$ is $r$-weakly compact-bounded in $\prod_{i\in \omega} G_i$.
\end{cor}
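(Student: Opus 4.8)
The plan is to obtain this corollary as a direct combination of the two results immediately preceding it, so almost all of the work has already been done. First I would verify that each factor $A_i$ meets the hypotheses of Proposition \ref{Cwcc}: by assumption $A_i$ is a $k$-space subset of the topological group $G_i$, and it is $C$-compact in $G_i$. Hence the equivalence (1)$\Leftrightarrow$(4) of Proposition \ref{Cwcc} applies and yields that each $A_i$ is in fact $r$-weakly compact-bounded in $G_i$.

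Having upgraded the group-theoretic hypothesis ($C$-compactness, which coincides with strong $r$-pseudocompactness in topological groups by Corollary 3.11 in \cite{HST}) to genuine $r$-weak compact-boundedness of each factor, I would then invoke the product proposition proved just above, which asserts that whenever each $A_i$ is an $r$-weakly compact-bounded subset of a space $X_i$ (here $X_i=G_i$), the product $\prod_{i\in\omega} A_i$ is $r$-weakly compact-bounded in $\prod_{i\in\omega} G_i$. This immediately gives the desired conclusion.

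There is no real obstacle here: the only thing to check is that the passage from the $C$-compact hypothesis to the $r$-weakly compact-bounded hypothesis is licensed, and this is exactly what Proposition \ref{Cwcc} provides, using crucially that each $A_i$ is a $k$-space subset of a \emph{topological group}. I would write the proof in two short sentences—apply Proposition \ref{Cwcc} factor by factor, then apply the product proposition—rather than re-running the diagonal/iteration argument of the product proposition, since that machinery is already available and need not be reproduced.

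\begin{proof}
For each $i\in\omega$, the set $A_i$ is a $C$-compact $k$-space subset of the topological group $G_i$; hence, by the equivalence (1)$\Leftrightarrow$(4) of Proposition \ref{Cwcc}, $A_i$ is $r$-weakly compact-bounded in $G_i$. Applying the preceding proposition on countable products of $r$-weakly compact-bounded subsets (with $X_i=G_i$), we conclude that $\prod_{i\in\omega} A_i$ is $r$-weakly compact-bounded in $\prod_{i\in\omega} G_i$.
\end{proof}
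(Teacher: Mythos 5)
Your proposal is correct and follows exactly the route the paper intends: the corollary is stated without proof precisely because it is the immediate combination of Proposition \ref{Cwcc} (whose equivalence (1)$\Leftrightarrow$(4) upgrades each $C$-compact $k$-space subset of a topological group to an $r$-weakly compact-bounded one) with the preceding proposition on countable products of $r$-weakly compact-bounded subsets. Nothing is missing; your two-sentence write-up is an appropriate level of detail.
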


If we want to know when the  $r$-weakly compact-bounded subsets of a paratopological group coincide with the $C$-compact  subsets, we have the class of  weakly $\omega$-admissible paratopological groups (\cite{XY}) that answer this question. By Theorem 3.20 in \cite{XY} and Corollary \ref{eqwr} the next fact is immediate:

\begin{prop} Let G be a  paratopological group. If G is weakly $\omega$-admissible, then the following conditions are equivalent for a  k-space subset A of G:
\begin{enumerate}
\item A is $r$-weakly compact-bounded in G;

\item A is strongly r-pseudocompact in G;

\item A is r-pseudocompact in G;

\item A is C-compact in G.

\end{enumerate}
\end{prop}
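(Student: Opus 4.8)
The plan is to assemble the four-way equivalence from the two results flagged just before the statement, handling the weak-regularity hypothesis and the weak $\omega$-admissibility hypothesis separately. Recall that the implications (1)$\Rightarrow$(2)$\Rightarrow$(3)$\Rightarrow$(4) always hold, as noted earlier in this section, so the real content is to close the loop back to (1). Weak regularity of $G$ together with the assumption that $A$ is a $k$-space subset places us exactly in the setting of Corollary \ref{eqwr}, which already yields (1)$\Leftrightarrow$(2)$\Leftrightarrow$(3); in particular it supplies the implication (3)$\Rightarrow$(1). Thus the only missing link is (4)$\Rightarrow$(3) (equivalently (4)$\Rightarrow$(2)).

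First I would invoke Corollary \ref{eqwr} verbatim: since $G$ is weakly regular and $A$ is a $k$-space subset of $G$, conditions (1), (2) and (3) are mutually equivalent. Next I would bring in the weak $\omega$-admissibility of $G$, which is precisely where Theorem 3.20 in \cite{XY} enters. For a weakly $\omega$-admissible paratopological group that theorem identifies $C$-compactness with (strong) $r$-pseudocompactness, in the same spirit as Corollary 3.11 in \cite{HST}, which equates $C$-compactness with strong $r$-pseudocompactness in the class of topological groups. Applying it yields the reverse implication (4)$\Rightarrow$(3). Chaining the two blocks of equivalences then gives (1)$\Leftrightarrow$(2)$\Leftrightarrow$(3)$\Leftrightarrow$(4), as required.

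Because both cited theorems do the genuine work, there is no hard analytic step here and the argument is essentially bookkeeping. The one point demanding care, and the nearest thing to an obstacle, is aligning the hypotheses exactly: I must confirm that the formulation of Theorem 3.20 in \cite{XY} applies to a $k$-space subset $A$ of a weakly $\omega$-admissible $G$ and really does equate $C$-compactness with one of conditions (1)--(3), without an extra structural or separation assumption beyond what weak regularity already provides. Once that match is verified, the displayed chain of equivalences is immediate.
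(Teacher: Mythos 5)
Your proposal is correct and matches the paper exactly: the paper gives no written proof, stating only that the result is immediate from Theorem 3.20 in \cite{XY} (which supplies the missing implication from $C$-compactness back to $r$-pseudocompactness in weakly $\omega$-admissible paratopological groups) combined with Corollary \ref{eqwr} (which handles the equivalence of (1)--(3) for $k$-space subsets of weakly regular spaces). Your caveat about verifying the precise formulation of Theorem 3.20 is the only real point of care, and it is the same implicit reliance the paper makes.
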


  We call a paratopological group $G$ \textit{totally $\omega$-narrow} if the topological group $G^*$ associated to $G$ is $\omega$-narrow.
  
  We say that a paratopological group $G$ is \textit{precompact} if for every neighborhood $U$ of the identity $e$ in $G$, there exists a finite subset $F$ of $G$ such that $G=FU=UF$. 

 It is possible to find some properties of paratopological groups under which a $C$-compact subset becomes a subset $r$-weakly compact-bounded.   By  Corollary 4.3 in \cite{ST} and Proposition \ref{rpse},  we have the next fact:

\begin{prop} Let A be a C-compact k-space subset of a Tychonoff paratopological group G which satisfies one of the following conditions:
\begin{enumerate}
\item G is totally $\omega$-narrow;
\item G is commutative and has countable Hausdorff number; 
\item G is precompact.
\end{enumerate}
Then A is $r$-weakly compact-bounded in G.
\end{prop}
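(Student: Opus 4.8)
The plan is to reduce the statement to the two results quoted just before it, namely Corollary 4.3 in \cite{ST} and Proposition \ref{rpse}. The only genuine work is to verify that the hypotheses of Proposition \ref{rpse} are satisfied, after which the conclusion follows by a single application.

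First I would record that a Tychonoff space is automatically weakly regular. Indeed, complete regularity yields regularity, so given a non-empty open set $U$ and a point $x\in U$ there is an open $V$ with $x\in V\subseteq \overline{V}\subseteq U$; the set $\overline{V}$ is then a non-empty regular closed subset of $U$. Hence every Tychonoff paratopological group $G$ is a weakly regular space, and Proposition \ref{rpse} applies to $G$.

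Next I would invoke Corollary 4.3 in \cite{ST}, which, under each of the three listed hypotheses on $G$ (totally $\omega$-narrow; commutative with countable Hausdorff number; precompact), guarantees that a $C$-compact subset of $G$ is $r$-pseudocompact in $G$. Applying this to our subset $A$, we obtain that $A$ is $r$-pseudocompact in $G$. Combined with the hypothesis that $A$ is a $k$-space, this places $A$ in exactly the situation addressed by Proposition \ref{rpse}.

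Finally, since $A$ is an $r$-pseudocompact $k$-space subset of the weakly regular space $G$, Proposition \ref{rpse} yields directly that $A$ is $r$-weakly compact-bounded in $G$, as required. The argument presents no substantial obstacle: essentially all of the content is packaged inside the cited Corollary 4.3, and the remaining steps are the observation that Tychonoff implies weak regularity together with a routine appeal to Proposition \ref{rpse}. The one point I would be careful about is matching the precise formulation of Corollary 4.3 in \cite{ST}; if that corollary is phrased as the coincidence of $C$-compactness with strong $r$-pseudocompactness (the paratopological analogue of Corollary 3.11 in \cite{HST}), I would first descend from strong $r$-pseudocompactness to $r$-pseudocompactness through the implication chain displayed earlier in Section 6 before invoking Proposition \ref{rpse}.
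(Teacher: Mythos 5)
Your proposal is correct and follows exactly the route the paper intends: the paper states this proposition as an immediate consequence of Corollary 4.3 in \cite{ST} (which converts $C$-compactness into $r$-pseudocompactness under each of the three hypotheses) together with Proposition \ref{rpse}, and your only added work --- checking that Tychonoff implies weak regularity so that Proposition \ref{rpse} applies --- is the right detail to supply. Your closing caveat about descending from strong $r$-pseudocompactness to $r$-pseudocompactness if needed is also consistent with the implication chain displayed earlier in Section 6.
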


\end{document}